\newtheorem{theorem}{Theorem}[section]
\newtheorem{lemma}[theorem]{Lemma}
\newtheorem{corollary}[theorem]{Corollary}
\newtheorem{definition}[theorem]{Definition}
\newtheorem{proposition}[theorem]{Proposition}
\theoremstyle{definition}
\newtheorem{example}[theorem]{Example}
\theoremstyle{remark}
\newtheorem{remark}[theorem]{Remark}
\newcommand{\RR}{\mathbb R}
\newcommand{\DD}{\mathcal D}
\newcommand{\VV}{\mathcal V}
\newcommand{\MM}{\mathcal M}
\newcommand{\tr}{{\rm tr}}
\newcommand{\bx}{\bar x}
\newcommand{\AutV}{\operatorname{Aut}(\VV)}
\newcommand{\DerV}{\operatorname{Der}(\VV)}
\newcommand{\ip}[2]{\left< #1,\, #2 \right>}
\newcommand{\Rn}{\mathbb{R}^n}
\begin{document}

\title{Commutation principles for optimization problems involving strictly Schur-convex functions in Euclidean Jordan algebras}

\author{Pedro G. Massey\footnote{CMaLP, Facultad de Cs. Exactas, Universidad Nacional de La Plata e Instituto Argentino de Matemática, CONICET, Argentina. (e-mails: massey@mate.unlp.edu.ar and nbrios@mate.unlp.edu.ar). Partially supported by CONICET (PIP 11220200103209CO) and Universidad Nacional de La Plata (UNLP 11X974), Argentina.} \and Noelia B. Rios$^*$. 
\and David Sossa\footnote{Universidad de O'Higgins, Instituto de Ciencias de la Ingenier\'ia, Av.\,Libertador Bernardo O'Higgins 611, Rancagua, Chile (e-mail: david.sossa@uoh.cl). Partially supported by  FONDECYT (Chile)  through grant 11220268, and MATH-AMSUD 23-MATH-09 MORA-DataS project.}    } 

\date{}

\maketitle

\bigskip
\bigskip

\begin{quote}{\small 
    \textbf{Abstract}. In this work we establish several commutation principles 
		for optimizers of shifts of spectral functions in the context of Euclidean Jordan Algebras
		(EJAs). For instance, we show that under certain assumptions, if $\bx$ is a (local) optimizer of $F(x-a)$ for $x\in\Omega$, where $\Omega\subset \VV$ is a spectral set of an EJA $\VV$, $a\in \VV$ and
		$F:\VV\rightarrow \mathbb R$ is a strictly Schur-convex spectral function, then $a$ and $\bx$ operator commute. We make no further assumption on the smoothness of $F$; instead, we take advantage of the smoothness 
		(Lie structure) of the Automorphism group of $\VV$ and make use of majorization techniques for the eigenvalues of elements in EJAs. Our approach allows us to deal with several problems considered in the literature, related to strictly convex spectral functions and strictly convex spectral norms. In particular, we use our commutation principles to analyze the problem of minimizing the condition number in EJAs. 
    
    \bigskip
    
    {\it Mathematics Subject Classification}:  17C20, 15A27, 15A42, 90C26
    
    {\it Keywords}:  Euclidean Jordan algebra, commutation principle, Lidskii's inequality, strictly Schur-convex function, condition number. }\end{quote}
\medskip



\section{Introduction}
Commutation principles in Euclidean Jordan algebras (EJAs) consist of a class of optimization problems involving (weakly) spectral sets and functions where it is ensured that each (local) optimizer operator commutes with some element related to the objective function. For instance, let $\mathcal V$ be a EJA,  consider the map
\begin{equation}\label{map0}
x\in\Omega\mapsto \Theta(x)+F(x),
\end{equation}
where $\Omega\subseteq\VV$, and $\Theta,\,F:\VV\to\mathbb R$. In \cite{RSS}, Ram\'irez, Seeger and Sossa have proved that when $\Omega$ is a spectral set, $\Theta$ is differentiable, and $F$ is a spectral function (see definitions in the next section), if $\bx$ is a local optimizer (minimizer or maximizer) of \eqref{map0}, then $\nabla\Theta(\bx)$ and $\bx$ operator commute. Gowda and Jeong \cite{GJ2017} further developed this commutation principle by assuming that $\Omega$ and $F$  are weakly spectral (i.e., they are invariant under automorphisms, which is a weaker condition than spectrality). These commutation principles were successfully applied in some variational problems like obtaining a meta-formula for generalized subdifferential of spectral functions \cite{Lourenco}, and deducing bounds for the determinant of the sum of two elements in EJAs \cite{Sossa}. It has also been investigated strong commutation principles for this class of problems, which consist of commutation principles that provide strong operator commutation. For instance, Gowda \cite{G2022} proved that under the assumption that $\Omega$ and $F$ are spectrally defined, and by setting $\Theta(x)=\langle x,a\rangle$, with $a\in\VV$,
\begin{equation}\label{strong}
\text{$\bx$ is a global maximizer of \eqref{map0} \quad $\Rightarrow$\quad $a$ and $\bx$ strongly operator commute.}
\end{equation}
This class of results allows us, for instance, to simplify the problem of minimizing \eqref{map0} to a minimization problem formulated in $\RR^n$, with $n$ being the rank of $\VV$, via the eigenvalue map.


In this paper, we study commutation principles for the problem of optimizing the map
\begin{equation}\label{map}
x\in\Omega\mapsto F(x-a),
\end{equation}
where $a\in\VV$, $\Omega\subseteq \VV$ and $F:\VV\to\mathbb R$ is a spectral function. Observe that, in general, the maps \eqref{map} and \eqref{map0} are not related; so, we can not use directly the commutation principle discussed above to characterize the optimizers of \eqref{map}. Indeed, despite the fact that $F$ is spectral, the shifted spectral function $F(\cdot-a)$ is no longer spectral. Moreover, we are not imposing a differentiability assumption over $F$.  Some particular cases of \eqref{map} have been studied in the literature. For instance, Jeong and Sossa \cite{JS2024} have recently proved that when $\Omega$ is weakly spectral and $F$ is a strictly convex spectral function or a strictly convex spectral norm, if $\bx$ is a local optimizer of \eqref{map} then $a$ and $\bx$ operator commute. Their proof relies on a commutation principle developed for \eqref{map0} by using nonsmooth analysis. Massey, Rios and Stojanoff \cite{Massey} have also obtained a commutation principle for \eqref{map}
in the context of Hermitian matrices, which are examples of (simple) EJAs. Their result states that in case $\Omega$ is an orbit of some element of $\VV$ and $F$ is a strictly convex spectral norm, if $\bx$ is a local minimizer of \eqref{map} then $a$ and $\bx$ strongly operator commute. Their approach is based on majorization techniques and on the smooth manifold structure of unitary orbits of Hermitian matrices. Similar results have also been applied in some other optimization problems \cite{Benac}.

Inspired by the works of Massey, Rios and Stojanoff \cite{Massey}, and Jeong and Sossa \cite{JS2024}, our aim is to develop commutation principles and strong commutation principles for the problem of optimizing \eqref{map} with $\Omega$ and $F$ as general as possible. Our proof techniques are mainly inspired by those techniques developed in \cite{Massey}; that is, we will use majorization techniques in EJAs, see \cite{JJL2020}. The spectral function $F$ will be assumed to be strictly Schur-convex. Observe that this class of functions is quite broad and covers classes like spectral functions that are strictly convex, strictly quasi-convex, and strictly convex spectral norms. We also introduce a new strictly Schur-convex function which is used to deal with the problem of minimizing the condition number (see Section\,\ref{application}).

Our main results are the following commutation principles: Let $a,\,b\in\VV$, $\Omega\subseteq \VV$ and $F:\VV\to\mathbb R$ be a strictly Schur-convex spectral function;
\begin{enumerate}
\item[(i)] Suppose that $\Omega$ is a spectral set. If $\bx$ is a global minimizer (resp. global maximizer) of \eqref{map}, then $a$ and $\bx$ (resp. $-a$ and $\bx$) strongly operator commute (see Theorem\,\ref{th:strongly}).
\item[(ii)] Suppose that $\Omega$ is a weakly spectral set. If $\bx$ is a local optimizer (minimizer or maximizer) of \eqref{map}, then $a$ and $\bx$ operator commute (see Theorem\,\ref{teo local Lidskii in EJAs tutti}).
\item[(iii)] Suppose that $\VV$ is simple, and set $\Omega$ as the eigenvalue orbit of $b$. If $\bx$ is a local minimizer (resp. local maximizer) of \eqref{map}, then $a$ and $\bx$ (resp. $-a$ and $\bx$) strongly operator commute. Moreover, $\bx$ is a global optimizer of \eqref{map} (see Theorem\,\ref{teo local Lidskii in EJAs}).
\end{enumerate}

 Regarding item ({\rm i}) above, we remark that in this case, if we replace the hypothesis that $\bx$ is a global minimizer (resp. global maximizer) 
by the weaker hypothesis that $\bx$ is a local minimizer (resp. local maximizer) 
of \eqref{map} 
then $a$ and $\bx$ (resp. $-a$ and $\bx$) do not necessarily strongly operator commute 
(see Example \ref{rem no ejem2}). We also point out that if we replace the assumption that $\Omega$ is a spectral set for the (weaker) assumption that 
$\Omega$ is a weakly spectral set and assume that $\bx$ is a global minimizer (resp. global maximizer) of \eqref{map} 
then, $a$ and $\bx$ (resp. $-a$ and $\bx$) do not necessarily strongly operator commute (see Example \ref{no exa1} below).

 Regarding item ({\rm ii}), notice that weakly spectral sets are rather small domains (as opposed to spectral sets, that are larger domains) for an spectral function in an arbitrary EJA. Hence, the hypothesis that $\bx$ is a local optimizer within weakly spectral sets is a rather weak assumption. In particular, the result also applies to local optimizers within spectral sets (see Corollary \ref{coro local Lidskii in EJAs x28}).

 Regarding item ({\rm iii}), we point out that if we remove the hypothesis that $\VV$ is simple and assume that $\bx$ is a local minimizer (resp. local maximizer) of \eqref{map} then, $a$ and $\bx$ (resp. $-a$ and $\bx$) do not necessarily strongly operator commute (see Example \ref{no exa1} below). Also, notice that we have set $\Omega$ to be a rather minimal set, and hence, the hypothesis that $\bx$ is a local minimizer in $\Omega$ is quite weak.

 As a consequence of the previous comments, notice that the results in items $({\rm i})$-$({\rm iii})$ are sharp.

The organization of the paper is as follows: In Section\,\ref{preli}, we provide a background of the concepts and tools from EJAs that we require for developing our results. In Section\,\ref{majorization}, we review majorization concepts and provide examples of strictly Schur-convex spectral functions. Sections \ref{strong principle}, \ref{general comm}, and \ref{strong local} are mainly devoted to proving the respective statements (given above) (i), (ii), and (iii). In Section\,\ref{application}, we illustrate the applications of our commutation principles to the analysis of the problem of minimizing the condition number in EJA.

\section{Preliminaries}\label{preli}
\subsection*{Euclidean Jordan algebras}
Let $(\VV,\langle\cdot,\cdot\rangle,\circ)$ be \emph{Euclidean Jordan algebra} (EJA) of rank $n$ and unit element $e$. For any $x,y\in \VV$, $\langle x,y\rangle$ denotes the inner product and  $x\circ y$ denotes the Jordan product. We refer the reader to the book of Faraut and Kor\'anyi \cite{FK} as a general reference for the theory of EJAs.

Here, we recall some definitions and properties of EJAs that will be used later. 
The Jordan product in $\VV$ satisfies the commutation law ($x\circ y=y \circ x$, for $x,\,y\in\VV$), 
but it is not associative in general; still, the Jordan product satisfies the Jordan identity: $(x\circ y)\circ (x\circ x)= x\circ (y\circ (x\circ x))$, for $x,\,y\in\VV$. 
Recall that an EJA is said to be \emph{simple} if it is not a direct product of nonzero EJAs. It is well known \cite[Corollary IV.1.5, Theorem V.3.7]{FK} that any EJA can be decomposed into a direct product of simple EJAs and there are only five simple ones up to isomorphism. One of them is the algebra $\mathcal S^n$ of $n\times n$ symmetric matrices with Jordan and inner products given by
\[ X \circ Y = \frac{XY + YX}{2}, \quad \ip{X}{Y} = \operatorname{tr}(XY)\;\mbox{(trace inner product)}. \]
An element $c\in \mathcal {V}$ is an \emph{idempotent} if $c^2 = c$. An
idempotent $c$ is  \emph{primitive} if it is nonzero and cannot be written as a sum of two nonzero idempotents. A  \emph{Jordan frame} is a collection $\{c_1,\ldots,c_n\}$ of primitive idempotents satisfying
$\sum_{i=1}^n c_i=e$, and $c_i\circ c_j=0$ when $i \neq j.$

The spectral decomposition theorem asserts that any element $x \in \VV$ can be written as
\[ x = \lambda_1(x) e_1 + \lambda_2(x) e_2 + \cdots + \lambda_n(x) e_n, \]
where $\lambda_1(x) \geq \lambda_2(x) \geq \cdots \geq \lambda_n(x)$ are real numbers called the eigenvalues of $x$, and $\{e_1, e_2, \ldots, e_n\}$ is a Jordan frame. We denote $\lambda(x)\in\mathbb R^n$ (eigenvalue map) to the vector whose components are the eigenvalues of $x$ arranged in nonincreasing order. The \emph{trace} of $x\in\VV$ is defined as the sum of its eigenvalues. That is, $\tr(x):=\lambda_1(x)+\cdots+\lambda_n(x).$ The inner product of $\VV$ is scaled so that we can assume that $\langle x,y\rangle=\tr(x\circ y)$ which is the \emph{trace inner product}. From now on, $\|\cdot\|$ will denote the norm on $\VV$ induced by the trace inner product; that is, $\|x\|=(\tr(x\circ x))^{1/2}$. When $\VV=\RR^n$, $\|\cdot\|$ coincides with the Euclidean norm (2-norm) on $\RR^n$.

Let $p$ be an idempotent element of $\VV$. It is known that $\VV$ can be decomposed as the direct sum
\begin{equation}\label{peirce}
\VV = \VV(p,1)\oplus \VV(p,0) \oplus \VV(p,1/2),
\end{equation}
where $\VV(p,\alpha):=\{x\in\VV:x\circ p=\alpha x\}$, for $\alpha\in\{1,0,1/2\}$.  This decomposition is called the \emph{Peirce decomposition} of $\VV$ with respect to $p$. It is also known that $\VV(p,1)$ and $\VV(p,0)$ are subalgebras of $\VV$, and $\VV(p,1/2)\neq\{0\}$ whenever $\VV$ is simple and $p\notin\{0,e\}$.

Two elements $a$ and $b$ in $\VV$ are said to \emph{operator commute} if $L_a L_b=L_b L_a$, where $L_a:\VV\to\VV$ is a linear map defined as $L_a(x)=a\circ x$ for all $x\in\VV$. The operator commutativity property in $\mathcal S^n$ coincides with the usual matrix commutativity property. That is, $A$ and $B$ operator commute in $\mathcal S^n$ if and only if $AB=BA$. It is known that $a, b \in \VV$ operator commute if and only if both $a$ and $b$ can be \emph{simultaneously decomposed} by the same Jordan frame. That is, there exist a Jordan frame $\{e_1, e_2, \ldots, e_n\}$ and real numbers $\alpha_i, \beta_i \in \mathbb{R}$ for $i = 1, 2, \ldots, n$ such that
\begin{equation}\label{eq op commute1} 
 a = \alpha_1 e_1 + \alpha_2 e_2 + \cdots + \alpha_n e_n, \quad b = \beta_1 e_1 + \beta_2 e_2 + \cdots + \beta_n e_n, 
\end{equation}
where $\alpha$ and $\beta$ are some rearrangements of $\lambda(a)$ and $\lambda(b)$, respectively. We say that $a, b \in \VV$ \emph{strongly operator commute}
 if there exists a frame $\{e_1, e_2, \ldots, e_n\}$ for $\VV$ such that the representations
in Eq. \eqref{eq op commute1} hold, in such a way that $\alpha = \lambda(a)$ and $\beta = \lambda(b)$.
Then, one can see that $a$ and $b$ strongly operator commute if and only if $\ip{a}{b} = \ip{\lambda(a)}{\lambda(b)}$.

\subsection*{Orbits, spectral sets, and spectral functions}

A linear map $X : \VV \to \VV$ is an \emph{(Jordan algebra) automorphism} if $X$ is bijective (invertible) and $X(a \circ b) = Xa \circ Xb,$ for all $a, b \in \VV$. The (Lie) group of all automorphisms of $\VV$ is denoted by $\AutV$. As an example, the automorphism group ${\rm Aut}(\mathcal S^n)$ of the EJA of $n\times n$ symmetric matrices can be identified with $\mathcal O(n)$ i.e. the orthogonal group in dimension $n$.

For $b\in\VV$, there are two kinds of orbits of $b$. The first one is the \emph{eigenvalue orbit} which consists of the elements of $\VV$ with the same eigenvalues of $b$; that is
$$[b]:=\{x\in\VV:\lambda(x)=\lambda(b)\}.$$
The second one is the \emph{Automorphism orbit} (or weak orbit), which is defined as
$$[b]_w:=\{Xb:X\in\AutV\}.$$
We have that $[b]_w\subseteq [b]$ and the equality holds whenever $\VV$ is simple.

The above orbits are the basic form of sets known as spectral and weakly spectral sets, respectively. We say that a set $Q\subseteq \mathbb R^n$ is \emph{symmetric} if it is permutation invariant which means that if $u\in Q$ then $Pu\in Q$ for every permutation matrix $P$ of order $n$. A set $\Omega\subset\VV$ is called \emph{spectral set} if there exists a symmetric set $Q\subseteq\mathbb R^n$ such that $\Omega=\{x\in\VV:\lambda(x)\in Q\}$. $\Omega$ is called \emph{weakly spectral set} if it is invariant under automorphisms which means that if $x\in\Omega$ and $A\in\AutV$ then $Ax\in\Omega$.  In particular, $[b]$ is a spectral set and $[b]_w$ is a weakly spectral set. It is known that any spectral set is weakly spectral, because of $\lambda(Ax)=\lambda(x)$ for all $A\in\AutV$ and $x\in\VV$. Moreover, the concepts of spectral and weakly spectral coincide whenever $\VV$ is simple.

We say that a function $f:\mathbb R^n\to\mathbb R$ is \emph{symmetric} if it is permutation invariant which means that if $u\in\mathbb R^n$ then $f(u)=f(Pu)$ for every permutation matrix $P$ of order $n$.  A function $F:\VV\to\mathbb R$ is said to be \emph{spectral function} if there exists a symmetric function $f:\mathbb R^n\to\mathbb R$ such that $F(x)=f(\lambda(x))$ for all $x\in\VV$.

\section{Majorization and strictly Schur-convex functions}\label{majorization}
For $u \in \Rn$, let $u^{\downarrow}\in \Rn$ denote the vector obtained from $u$ by arranging its entries in non-increasing order. We denote $(\RR^n)^{\downarrow}:=\{u\in\RR^n:u=u^\downarrow\}$. For $u,v\in\Rn$, one says that $u$ is \emph{majorized} by $v$, in symbol $u\prec v$, if 
\begin{equation}\label{eq defi mayo1}
u^{\downarrow}_1+\cdots+ u^{\downarrow}_k\leq v^{\downarrow}_1+\cdots+ v^{\downarrow}_k \quad \text{ for every } \quad k=1,\ldots,n\, ,
\end{equation} 
and $u^{\downarrow}_1+\cdots+ u^{\downarrow}_n=v^{\downarrow}_1+\cdots+ v^{\downarrow}_n$. If only the conditions in \eqref{eq defi mayo1} hold, we say that $u$ is submajorized by $v$, and write $u\prec_w v$. 
We say that $u$ is \emph{strictly majorized} by $v$ if $u\prec v$ and $u^\downarrow\neq v^\downarrow$ ($u$ is not a rearrangement of $v$). For $x,y\in\VV$, we say that $x$ is majorized by $y$ in $\VV$ and write $x\prec y$ if $\lambda(x)\prec\lambda(y)$ in $\RR^n$. We say that $x$ is strictly majorized by $y$ in $\VV$ if $\lambda(x)$ is strictly majorized by $\lambda(y)$ in $\RR^n$.



A function $f:\mathbb R^n\to\mathbb R$ is said to be \emph{Schur-convex} if for all $u,v\in\mathbb R^n$,
$$u\prec v\;\Rightarrow\;f(u)\leq f(v).$$
If, in addition, $f(u)<f(v)$ whenever $u$ is strictly majorized by $v$, then $f$ is said to be \emph{strictly Schur-convex} (cf. \cite{Marshall}). 

\begin{definition}
Let $F:\VV\to\RR$ be a spectral function given by $F(x)=f(\lambda(x))$. We say that $F$ is a \emph{Schur-convex} (resp. strictly Schur-convex) spectral function if its associated symmetric function $f$ is Schur-convex (resp. strictly Schur-convex).
\end{definition}
Since the commutation principles that we developed in this work require considering strictly Schur-convex spectral functions, we list some examples of them below. Other examples can be found in \cite[Chapter\,3]{Marshall} and \cite{Stepniak}.
\begin{example}
Any strictly convex spectral function $F$ is strictly Schur-convex. Indeed, if $f$ is the associated symmetric function of $F$, in \cite{JS2024} was proved that $F$ is strictly convex if and only if so is $f$. Then, the result follows by recalling that any strictly convex symmetric function is strictly Schur-convex.
\end{example}
\begin{example}
We say that $F:\VV\to\RR$ is a \emph{strictly convex spectral norm} if it is a spectral function and a strictly convex norm in $\VV$. Recall that $F(\cdot)$ is a strictly convex norm in $\VV$ if it is a norm in $\VV$ satisfying $F(x+y)<2$ for any distinct nonzero $x,y\in\VV$ such that $F(x)=F(y)$. For instance, the Schatten $p$-norm of $x\in\VV$ given by $||x||_p:=(|\lambda_1(x)|^p+\cdots+|\lambda_n(x)|^p)^{1/p}$ is a strictly convex norm for $p>1$. In \cite{JS2024}, it is proved that any strictly convex spectral norm $F$ in $\VV$ is strictly Schur-convex. Indeed, $F$ is a strictly convex spectral norm in $\VV$ if and only if its associated symmetric function $f$ is a strictly convex symmetric norm in $\RR^n$. Moreover, any strictly convex symmetric norm in $\RR^n$ is strictly Schur-convex.
\end{example}
\begin{example}
Recall that $f:\RR^n\to\RR$ is a \emph{strictly quasi-convex} function if $f(\alpha u+(1-\alpha)v)<\max\{f(u),f(v)\}$ for all $u\neq v$ and $\alpha\in(0,1)$, see \cite[Definition\,2.2.2]{Cambini}. By using induction, one can prove that $f$ is strictly quasi-convex if and only if for all integer $k\geq 2$, and for all distinct vectors $u_1,\ldots,u_k$ in $\mathbb R^n$, $f(\sum_{i=1}^k\alpha_i u_i)<\max_{i=1,\ldots,k}f(u_i)$ for all $\alpha_i\in(0,1)$ such that $\sum_{i=1}^k\alpha_i=1$.

Let  $f:\RR^n\to\RR$ be a strictly quasi-convex symmetric function. We claim that $f$ is strictly Schur-convex. Indeed,
let $u,v\in\RR^n$ be such that $u\prec v$ and $u^\downarrow\neq v^\downarrow$. Since $f$ is strictly quasi-convex, then it is quasi-convex. It is known that a quasi-convex symmetric function is Schur-convex \cite{Marshall}. Then, $f(u)\leq f(v)$. We will prove that $f(u)<f(v)$ when $f$ is strictly quasi-convex. In fact, suppose that $f(u)=f(v)$. Since $u$ is strictly majorized by $v$, then $u$ is in the convex hull of $\{Pv:P\text{ is a $n\times n$ permutation matrix}\}$ but it is not an extreme point of that set. Hence, $u=\sum_{i=1}^k\alpha_i P_i v$, where $k\geq 2, \alpha_i>0$ for all $i$, $\sum_{i=1}^k\alpha_i=1$, $P_i$ is a permutation matrix for all $i$, and $P_iv\neq P_jv$ whenever $i\neq j$. Thus, we arrive to the following contradiction:
$$f(v)=f\left(\sum_{i=1}^k\alpha_i P_i v\right)<\max_{i=1,\ldots,k}f(P_iv)=f(v),$$
where the strict inequality is because of the strict quasi-convexity of $f$, and the right equality is because $f$ is symmetric. The claim follows.

Therefore, a spectral function is strictly Schur-convex whenever its associated symmetric function is strictly quasi-convex. 
\end{example}

\begin{example}\label{example ssc functions1}
 Let $\VV$ be an EJA of rank $n$ and let $\VV_+=\{b\in \VV\ : \ \lambda_n(b)>0\}$. 
Given $b\in \VV_+$, we can consider the condition number $\mathfrak{c}(b)=\lambda_1 (b)/ \lambda_n(b)>0$, which is a measure of the spread of the eigenvalues of $b$ (see \cite{Seeger2022} for some optimization problems related with the condition number in EJAs). Similarly, if 
$c\in \VV$
 we can consider the spectral spread of $c$, $\text{Sprd}(c)=\lambda_1(c)-\lambda_n(c)\geq 0$, which is also a measure of the spread of the eigenvalues of $c$. For the particular case when $\mathcal V=\mathcal S^n$ is the EJA of real symmetric $n\times n$ matrices, $\mathfrak{c}(b)$ and $\text{Sprd}(c)$ play a role in determining the stability properties of $b\in \mathcal S^n_+$ and $c\in \mathcal S^n$. It is well known that $\mathfrak{c}(\cdot):\VV_+\rightarrow \mathbb R$ and $\text{Sprd}(\cdot):\VV\rightarrow \mathbb R$ are Schur-convex functions (since $u\prec v$ implies that 
$\lambda_n(v)\leq \lambda_n(u)$ and $\lambda_1(v)\geq \lambda_1(u)$) but not strictly Schur-convex. In order to get the corresponding strictly Schur-convex function associated with 
$\mathfrak{c}(\cdot)$ and $\text{Sprd}(\cdot)$ we consider the functions
$$
\kappa_{\|\cdot\|}(b)=\| (\lambda_i(b)/\lambda_{n-i+1}(b))_{i=1}^{\lfloor n/2\rfloor}\|\quad \text{and}\quad \text{Sprd}_{\|\cdot\|}(c)=\|(\lambda_i(c)-\lambda_{n-i+1}(c))_{i=1}^{\lfloor n/2\rfloor}\|\,,
$$
where $b\in\VV_+$, $c\in\VV$, and $\|\cdot \|$ is the Euclidean norm in $\mathbb R^{ \lfloor n/2\rfloor}$ ($\lfloor n/2\rfloor$ denotes the integer part of $n/2$). 
The case $\kappa_{\|\cdot\|}$ is analyzed in Section\,\ref{application}. The analysis for the case  $\text{Sprd}_{\|\cdot\|}$ is analogous. See \cite{KA09} for the introduction of the vector valued spread of selfadjoint matrices and \cite{MSZ21,MSZ21b} for some of its basic properties and applications in $\mathcal S^n$.
\end{example}

One of the main tools used to develop our analysis is the \emph{Lidskii's inequality}. As mentioned in \cite{Lewis1999}, the Lidskii's inequality is one of the central tools for studying perturbation theory for the eigenvalues of symmetric matrices. For the general case of Euclidean Jordan algebras, the proof of the Lidskii's inequality can be found in \cite[Theorem 5.1.]{JJL2020}. The Lidskii's inequality says that for $a$ and $b$ in $\VV$, the following majorization is satisfied:
\begin{equation}\label{lidskii}
    \lambda(a)-\lambda(b)\prec \lambda(a-b).
    \end{equation}

We will also use the \emph{Ky Fan's inequality} which says that for $a$ and $b$ in $\VV$:
\begin{equation}\label{fan}
    \lambda(a+b)\prec \lambda(a)+\lambda(b).
    \end{equation}
    It is not difficult to see that the Ky Fan's inequality can be obtained from the Lidskii's inequality.

Below, we provide a characterization for the strong operator commutation property by using the Lidskii's inequality.

\begin{proposition}\label{lidskiistrong}
    Let  $a, b\in\mathcal V$. The following statements are equivalent:
    \begin{enumerate}
\item[(a)] $a$ and $b$ strongly operator commute.
\item[(b)]  $\displaystyle \lambda(a+b)= \lambda(a)+\lambda(b).$
\item[(c)] $\displaystyle (\lambda(a)-\lambda(b))^{\downarrow}=\lambda(a-b).$
\end{enumerate}
\end{proposition}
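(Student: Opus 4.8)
The plan is to reduce all three conditions to the inner-product characterization recalled in the preliminaries, namely that $a$ and $b$ strongly operator commute if and only if $\langle a,b\rangle=\langle\lambda(a),\lambda(b)\rangle$. First I would record two elementary facts. Fact 1: the trace norm is spectral, $\|x\|^2=\tr(x\circ x)=\sum_i\lambda_i(x)^2=\|\lambda(x)\|^2$ for every $x\in\VV$; this is immediate from the spectral decomposition, since $x=\sum_i\lambda_i(x)e_i$ gives $x\circ x=\sum_i\lambda_i(x)^2 e_i$. Fact 2: the symmetric function $u\mapsto\|u\|^2$ on $\RR^n$ is strictly convex, hence strictly Schur-convex; consequently, if $u,v\in\RR^n$ satisfy $u\prec v$, then $\|u\|\le\|v\|$, and $\|u\|=\|v\|$ forces $u^\downarrow=v^\downarrow$ (otherwise $u$ would be strictly majorized by $v$ and strict Schur-convexity would give $\|u\|^2<\|v\|^2$).

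Granting this, the equivalence (a)$\Leftrightarrow$(b) follows from Ky Fan's inequality \eqref{fan}. Since $\lambda(a+b)\prec\lambda(a)+\lambda(b)$, Fact 2 gives $\|\lambda(a+b)\|\le\|\lambda(a)+\lambda(b)\|$, with equality precisely when $\lambda(a+b)$ is a rearrangement of $\lambda(a)+\lambda(b)$; but $\lambda(a)+\lambda(b)$ is itself nonincreasing, being a sum of two nonincreasing vectors, so equality holds if and only if $\lambda(a+b)=\lambda(a)+\lambda(b)$, that is, (b). On the other hand, expanding squared norms and using Fact 1, $\|\lambda(a+b)\|^2=\|a+b\|^2=\|a\|^2+\|b\|^2+2\langle a,b\rangle$ while $\|\lambda(a)+\lambda(b)\|^2=\|a\|^2+\|b\|^2+2\langle\lambda(a),\lambda(b)\rangle$. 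Hence the inequality above is exactly $\langle a,b\rangle\le\langle\lambda(a),\lambda(b)\rangle$, and it is an equality if and only if $\langle a,b\rangle=\langle\lambda(a),\lambda(b)\rangle$, i.e. if and only if (a). Combining the two equivalences yields (a)$\Leftrightarrow$(b).

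The equivalence (a)$\Leftrightarrow$(c) is obtained in the same way, using Lidskii's inequality \eqref{lidskii} in place of Ky Fan's. From $\lambda(a)-\lambda(b)\prec\lambda(a-b)$ and Fact 2 one gets $\|\lambda(a)-\lambda(b)\|\le\|\lambda(a-b)\|$ with equality iff $\lambda(a-b)$ is a rearrangement of $\lambda(a)-\lambda(b)$; since $\lambda(a-b)$ is nonincreasing by definition, this says $(\lambda(a)-\lambda(b))^\downarrow=\lambda(a-b)$, i.e. (c). Expanding squared norms as before (now with minus signs) shows this same inequality reads $\langle a,b\rangle\le\langle\lambda(a),\lambda(b)\rangle$, with equality iff (a). Hence (a)$\Leftrightarrow$(c).

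I do not expect a serious obstacle: the argument is just the combination of the two available majorization inequalities with the strictness of $u\mapsto\|u\|^2$. The only points that need a little care are the two invocations of the equality case of majorization, where one must observe that the majorant ($\lambda(a)+\lambda(b)$, respectively $\lambda(a-b)$) is already arranged nonincreasingly, so that ``equal up to a permutation'' upgrades to genuine equality; and the passage between $\VV$ and $\RR^n$, which rests entirely on the identity $\|x\|=\|\lambda(x)\|$ from Fact 1.
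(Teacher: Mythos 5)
Your proof is correct, and it takes a genuinely different route from the paper's. The paper treats the three equivalences non-uniformly: it cites \cite[Proposition 2.6]{Gowda2019} for (a)$\Leftrightarrow$(b), proves (a)$\Rightarrow$(c) by writing out the simultaneous spectral decomposition, and proves (c)$\Rightarrow$(a) by an optimization detour --- showing that (c) forces $a$ to be a global minimizer of $\tfrac12\|x-b\|^2$ over the orbit $[a]$, rewriting this as maximizing $\langle x,b\rangle-\tfrac12\|x\|^2$, and invoking the strong commutation principle \eqref{strong}. You instead reduce both (b) and (c) to the single inner-product characterization $\langle a,b\rangle=\langle\lambda(a),\lambda(b)\rangle$ recalled in the preliminaries, by combining Ky Fan (resp.\ Lidskii) with the strict Schur-convexity of $u\mapsto\|u\|^2$ and the identity $\|x\|=\|\lambda(x)\|$. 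This is more uniform and arguably more elementary: it avoids both the external citation for (a)$\Leftrightarrow$(b) and the appeal to \eqref{strong}, and as a byproduct it re-derives the trace inequality $\langle a,b\rangle\le\langle\lambda(a),\lambda(b)\rangle$ from Ky Fan. The two points you flag --- that the majorants $\lambda(a)+\lambda(b)$ and $\lambda(a-b)$ are already nonincreasing, so ``equal up to permutation'' upgrades to genuine equality --- are exactly the delicate ones, and you handle them correctly. The only dependency worth noting is that the ``if'' direction of the inner-product characterization (equality in the trace inequality implies strong operator commutation) is itself a nontrivial fact; since the paper states it in the preliminaries as known, your use of it is legitimate and puts your argument on the same footing as the paper's reliance on \cite{Gowda2019} and \eqref{strong}.
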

\begin{proof}
The equivalence $(a)\Leftrightarrow (b)$ is proved in \cite[Proposition\,2.6]{Gowda2019}. Let us prove $(a)\Leftrightarrow (c)$. Suppose that $a$ and $b$ strongly operator commute. Then there exist a Jordan frame $\{c_1,\ldots,c_n\}$ such that $a=\sum_{i=1}^n\lambda_i(a) c_i$ and $b=\sum_{i=1}^n\lambda_i(b)c_i$. Hence,
$a-b=\sum_{i=1}^n(\lambda_i(a)-\lambda_i(b))c_i$ which implies the desired result $\lambda(a-b)=(\lambda(a)-\lambda(b))^\downarrow$. Suppose now that $(c)$ holds. Let $x\in[a]$. Then, by observing that $\lambda(x)=\lambda(a)$, and by using the Lidskii's inequality we obtain
 \begin{equation}\label{eqlid1}
     \lambda(a-b)=(\lambda(x)-\lambda(b))^{\downarrow}\prec\lambda(x-b).
 \end{equation}
 Since the 2-norm function $\Vert\cdot\Vert$ in $\mathbb R^n$ is convex and symmetric, we have that it is Schur-convex. Then, from \eqref{eqlid1} we obtain $\Vert \lambda(a-b)\Vert\leq \Vert \lambda(x-b)\Vert$. Hence,
 $\Vert a-b\Vert\leq \Vert x-b\Vert$ for all $x\in[a]$
where the norm on $\VV$ is the one induced by the trace inner product. Hence, $a$ is global minimizer of $\frac{1}{2}\Vert x-b\Vert^2$ for $x\in[a]$. Note that this is equivalent to say that $a$ is a global maximizer of $\langle x,b\rangle -\frac{1}{2}\Vert x\Vert^2$ for $x\in[a]$. Thus, since $-\frac{1}{2}\Vert \cdot\Vert^2$ is a spectral function, we can use the strong commutation principle \eqref{strong} to deduce that $a$ and $b$ strongly operator commute. 
\end{proof}

\section{On global optimizers and strong commutation principles}\label{strong principle}

\begin{theorem}\label{th:strongly}
    Let $F:\mathcal V\to\mathbb R$ be a strictly Schur-convex spectral function with $f:\RR^n\to\RR$ as its associated symmetric function, let $\Omega\subseteq\VV$ be a spectral set with $Q\subseteq\RR^n$ as its associated symmetric set, and let $a\in \VV$. Then, the following statements are satisfied:
    \begin{enumerate}
    \item[(a)] If $\bx$ is a global minimizer of $F(x-a)$ for $x\in\Omega$, then $a$ and $\bx$ strongly operator commute. Moreover,
    \begin{equation}\label{igual}
    \min_{x\in\Omega} F(x-a)=\min_{u\in Q}f(u-\lambda(a))=f(\lambda(\bx)-\lambda(a)).
    \end{equation}
    \item[(b)] If $\bx$ is a global maximizer of $F(x-a)$ for $x\in\Omega$, then $-a$ and $\bx$ strongly operator commute. Moreover,
    \begin{equation}\label{igual2}
    \max_{x\in\Omega} F(x-a)=\max_{u\in Q}f(u+\lambda(-a))=f(\lambda(\bx)+\lambda(-a)).
    \end{equation}
    \end{enumerate}
\end{theorem}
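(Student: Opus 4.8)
The plan is to sandwich the optimal value of $F(x-a)$ over $\Omega$: on one side by a quantity expressed through $f$ and $Q$, using Lidskii's inequality \eqref{lidskii} for the minimization statement and Ky Fan's inequality \eqref{fan} for the maximization statement, and on the other side by an element that operator commutes with $a$; the forced equality in the resulting chain is then converted into strong operator commutation through Proposition \ref{lidskiistrong}.

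For part (a): for every $x\in\Omega$ we have $\lambda(x)\in Q$, and applying \eqref{lidskii} to $(x,a)$ gives $\lambda(x)-\lambda(a)\prec\lambda(x-a)$, so by Schur-convexity of $f$, $f(\lambda(x)-\lambda(a))\le f(\lambda(x-a))=F(x-a)$; hence $\min_{u\in Q}f(u-\lambda(a))\le\min_{x\in\Omega}F(x-a)$. For the reverse inequality, given $u\in Q$ I would write $a=\sum_i\lambda_i(a)\,c_i$ in a Jordan frame $\{c_i\}$ and set $x_u:=\sum_i u_i\,c_i$; then $\lambda(x_u)=u^{\downarrow}\in Q$ — this is the one place where one needs $Q$ symmetric, that is, $\Omega$ a \emph{spectral} set and not merely weakly spectral — so $x_u\in\Omega$, and since $x_u$ and $a$ operator commute one has $x_u-a=\sum_i(u_i-\lambda_i(a))c_i$, whence $F(x_u-a)=f\big((u-\lambda(a))^{\downarrow}\big)=f(u-\lambda(a))$ using symmetry of $f$. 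This gives $\min_{x\in\Omega}F(x-a)=\min_{u\in Q}f(u-\lambda(a))$. Now if $\bx$ is a global minimizer, then, using $\lambda(\bx)\in Q$,
\[ f(\lambda(\bx)-\lambda(a))\;\le\; f(\lambda(\bx-a))\;=\;F(\bx-a)\;=\;\min_{u\in Q}f(u-\lambda(a))\;\le\; f(\lambda(\bx)-\lambda(a)), \]
so all terms coincide, which is \eqref{igual}; in particular $f(\lambda(\bx)-\lambda(a))=f(\lambda(\bx-a))$ while $\lambda(\bx)-\lambda(a)\prec\lambda(\bx-a)$, so \emph{strict} Schur-convexity of $f$ forbids $\lambda(\bx)-\lambda(a)$ from being strictly majorized by $\lambda(\bx-a)$, i.e.\ $(\lambda(\bx)-\lambda(a))^{\downarrow}=\lambda(\bx-a)$. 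The equivalence $(a)\Leftrightarrow(c)$ of Proposition \ref{lidskiistrong} applied to the pair $(\bx,a)$ then gives that $a$ and $\bx$ strongly operator commute.

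Part (b) runs parallel, with Ky Fan's inequality \eqref{fan} replacing Lidskii's: for $x\in\Omega$ one has $\lambda(x-a)=\lambda(x+(-a))\prec\lambda(x)+\lambda(-a)$, hence $F(x-a)\le f(\lambda(x)+\lambda(-a))\le\max_{u\in Q}f(u+\lambda(-a))$, and this is attained at $x_u:=\sum_i u_i\,d_i$ with $-a=\sum_i\lambda_i(-a)\,d_i$, since then $F(x_u-a)=f\big((u+\lambda(-a))^{\downarrow}\big)=f(u+\lambda(-a))$; thus $\max_{x\in\Omega}F(x-a)=\max_{u\in Q}f(u+\lambda(-a))$, which yields \eqref{igual2}. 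For a global maximizer $\bx$ this forces $f(\lambda(\bx-a))=f(\lambda(\bx)+\lambda(-a))$ with $\lambda(\bx-a)\prec\lambda(\bx)+\lambda(-a)$, so strict Schur-convexity gives $(\lambda(\bx)+\lambda(-a))^{\downarrow}=\lambda(\bx-a)$; since $\lambda(\bx)$ and $\lambda(-a)$ are both nonincreasing so is their sum, hence $\lambda(\bx+(-a))=\lambda(\bx)+\lambda(-a)$, and the equivalence $(a)\Leftrightarrow(b)$ of Proposition \ref{lidskiistrong} applied to $(\bx,-a)$ shows that $-a$ and $\bx$ strongly operator commute.

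I expect the only genuinely delicate points to be (i) the construction of the matching elements $x_u$ and carefully tracking where symmetry of $Q$ (equivalently, spectrality, as opposed to weak spectrality, of $\Omega$) is used, and (ii) reading off the correct clause of Proposition \ref{lidskiistrong} from the forced majorization equality; the rest is a mechanical chaining of \eqref{lidskii}, \eqref{fan}, and the (strict) Schur-convexity of $f$.
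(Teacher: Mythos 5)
Your proposal is correct and follows essentially the same route as the paper: the key element $x_u$ diagonal in a Jordan frame of $a$, Lidskii's (resp.\ Ky Fan's) inequality, strict Schur-convexity forcing equality in the majorization, and Proposition \ref{lidskiistrong} to convert that equality into strong operator commutation. The only difference is organizational — you establish the value identity \eqref{igual} first and read off the commutation from the forced equality chain, whereas the paper derives the commutation first via the specific point $\hat x=\sum_i\lambda_i(\bx)c_i$ and then proves \eqref{igual} by contradiction — which does not change the substance of the argument.
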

\begin{proof}
    $(a)$. Let $\bx\in\Omega$ be a global minimizer of $F(x-a)$, for $x\in\Omega$. Suppose that $a$ has the spectral decomposition $a=\sum_{i=1}^n\lambda_i(a)c_i$ for some Jordan frame $\{c_1,\ldots,c_n\}$. Let us take $\hat x:=\sum_{i=1}^n\lambda_i(\bx)c_i$. Observe that $\lambda(\hat x)=\lambda(\bx)$ by construction. Hence, $\hat x\in [\bx]\subseteq \Omega$ since this last set is spectral. Then, $\hat x-a=\sum_{i=1}^n(\lambda_i(\bx)-\lambda_i(a))c_i$ which implies
    \begin{equation}\label{lidskii2}
    \lambda(\hat x-a)=(\lambda(\bar x)-\lambda(a))^\downarrow\prec\lambda(\bx-a),
    \end{equation}
    where the majorization is because of the Lidskii's inequality \eqref{lidskii}.
    Let $f$ be the symmetric function associated with $F$. Since $F$ is a strictly Schur-convex spectral function, then $f$ is a strictly Schur-convex symmetric function.  Then, from \eqref{lidskii2}, we obtain
    \begin{equation}\label{ineq}
    F(\hat x-a)=f(\lambda(\hat x-a))\leq f(\lambda(\bx-a))=F(\bx-a).
    \end{equation}
    Recall that $\bx$ is a global minimizer of $F(x-a)$ for $x\in\Omega$. Then, this fact and \eqref{ineq} imply
    $f(\lambda(\hat x-a))=f(\lambda(\bx-a))$. Hence, due to the strict Schur-convexity of $f$ we deduce that 
\begin{equation}\label{trieq}
\lambda(\hat x-a)=\lambda(\bx-a).
\end{equation}
Indeed, otherwise, $\lambda(\hat x-a)$ is strictly majorized by $\lambda(\bx-a)$ and the inequality \eqref{ineq} should be strict. Now, by combining the equality in \eqref{lidskii2} with \eqref{trieq} we deduce that
$\lambda(\bx-a)=(\lambda(\bar x)-\lambda(a))^\downarrow$. Thus, from Proposition\,\ref{lidskiistrong}, we conclude that $a$ and $\bx$ strongly operator commute.

To prove \eqref{igual}, observe that since $a$ and $\bx$ strongly operator commute, there exists a Jordan frame $\{e_1,\ldots,e_n\}$ such that $a=\sum_{i=1}^n\lambda_i(a)e_i$ and $\bx=\sum_{i=1}^n\lambda_i(\bx)e_i$. Then,
\begin{equation}\label{relation}
\min_{x\in\Omega}F(x-a)=F(\bx -a)=f(\lambda(\bx)-\lambda(a))\geq \min_{u\in Q}f(u-\lambda(a)).
\end{equation}
We claim that the equality is attained in \eqref{relation}. Indeed, let $\hat u$ be a global solution of the right-hand side problem of \eqref{relation}. If  $f(\hat u-\lambda(a))<f(\lambda(\bx)-\lambda(a))$, then we can take $\hat x=\sum_{i=1}^n\hat u_i e_i$, which is in $\Omega$ since $\lambda(\hat x)=\hat u^{\downarrow}\in Q$, obtaining
$$F(\hat x-a)=f(\hat u-\lambda(a))<f(\lambda(\bx)-\lambda(a))=F(\bx-a),$$
contradicting that $\bx$ is a global solution of the left-hand side problem of \eqref{relation}.

\noindent $(b)$. Let $\bx\in\Omega$ be a global maximizer of $F(x-a)$ for $x\in\Omega$. We consider the representation 
of $a$ as $a=\sum_{i=1}^n-\lambda_i(-a)c_i$, and we set $\hat x:=\sum_{i=1}^n\lambda_i(\bx)c_i$. Then, from the Ky Fan inequality \eqref{fan} we get
$$\lambda(\bar{x}-a)\prec \lambda(\bar{x})+\lambda(-a)=\lambda(\hat{x}-a).$$
Thus, $F(\bx-a)\leq F(\hat x-a)$, and since $\bx$ is a global maximizer we have $F(\bx-a)= F(\hat x-a)$. Therefore, by reasoning as in item $(a)$, we deduce that   $\lambda(\bar{x}-a)=\lambda(\bar{x})+\lambda(-a)$, which implies that $-a$ and $\bx$ strongly operator commute because of Proposition\,\ref{lidskiistrong}. The proof of \eqref{igual2} is analogous to the proof of \eqref{igual} with the difference that in this case we have $a=\sum_{i=1}^n\lambda_i(-a)e_i$ and $\bx=\sum_{i=1}^n\lambda_i(\bx)e_i$, for some Jordan frame $\{e_1,\ldots,e_n\}$, because $-a$ and $\bx$ strongly operator commute.

\end{proof}

It is natural to consider whether the above Theorem is still valid if $\Omega$ is assumed to be weakly spectral. The following example shows that this is not the case.

\begin{example}\label{no exa1} 
 Assume that $\mathcal S$ is a simple EJA of rank $2$, and consider the nonsimple EJA given by $\mathcal V= \mathcal S \times \mathcal S $. For $b\in\VV$, recall that $[b]_w$ is weakly spectral (but not necessarily spectral since the algebra is not simple). Below we construct $a,\,b,\, \bx\in \mathcal V$ 
such that $\bx\in [b]_w$ is a global minimizer of $F(x-a)$ for $x\in [b]_w$ (for any continuous function $F:\mathcal V\rightarrow \mathbb R$) and such that $a$ and $\bx$ do not strongly operator commute. 

Recall from \cite[Theorem\,5.1]{GJ2017} that in this case, each $\phi\in {\rm Aut}(\mathcal V)$ has the form
$\phi=( \phi_1,\phi_2)\circ \sigma
$, with $\phi_j\in {\rm Aut}(\mathcal S)$, $j=1,\,2$ and where $\sigma$ denotes the identity of $\mathcal V$ or the transposition
$\sigma((b_1,b_2))=(b_2,b_1)$, for $(b_1,b_2)\in\mathcal V$.
As a consequence, we conclude that given $b=(b_1,b_2)\in\mathcal V$
$$
[b]_w=\{\, x=(x_1,x_2)\in\mathcal V:\ (\lambda(x_1),\lambda(x_2))= (\lambda(b_1),\lambda(b_2)) \ \text{ or } \  (\lambda(x_1),\lambda(x_2))= (\lambda(b_2),\lambda(b_1)) \, \}\,,
$$ where $\lambda(x_j)\in\mathbb R^2$ denotes the eigenvalue vector of $x_j\in\mathcal S$, for $j=1,2$. 
Hence, the structure of $\mathcal V$ imposes certain restrictions on $a=(a_1,a_2)\in\mathcal V$ so that this element can be an element of the orbit $[b]_w$, beyond merely spectral conditions on $\lambda(a)\in\mathbb R^4$. Indeed, consider $b=(b_1,b_2)\in\mathcal V$ such that $\lambda(b_1)=(4,1)$ and $\lambda(b_2)=(3,2)$. Let $\{e_1,\,e_2\}$ be a Jordan frame for $\mathcal S$ and consider 
$a_1=4\,e_1+3\,e_2$ and $a_2=2\,e_1+1\,e_2$. Hence, 
$\lambda(a_1)=(4,3)$ and $\lambda(a_2)=(2,1)$ so that $a=(a_1,a_2)\notin \mathcal [b]_w$, even when $\lambda(b)=\lambda(a)=(4,3,2,1)$. 

We now show that $a$ and $x$ do not strongly operator commute, for any
$x\in [b]_w$. Indeed, if there is $x\in [b]_w$ such that $a$ and $x$ strongly operator commute
then there exists a Jordan frame $\{f_1,\ldots,f_{4}\}$ for $\mathcal V$ such that
$x=\sum_{j=1}^{4} \lambda_j(b) \,f_j$ and $a=\sum_{j=1}^{4} \lambda_j(a) \,f_j$. Since all eigenvalues of $a$ are simple
(have multiplicity one) then we get that $f_1=(e_1,0)$, $f_2=(e_2,0)$, $f_3=(0,e_1)$ and $f_4=(0,e_2)$.
Hence, we now see that $x_1=4\,e_1+3\,e_2=a_1$ and $x_2=2\,e_1+1\,e_2=a_2$ (thus, $x_j$ and $a_j$ strongly operator commute in $\mathcal S$, for $j=1,2$). Notice that this last fact contradicts that $x\in [b]_w$.

Let $F:\mathcal V\rightarrow \mathbb R$ be continuous. Since $[b]_w$ is compact, we conclude that there exists $\bx\in [b]_w$ which is a minimizer
of $F(x-a)$ for $x\in [b]_w$; by the previous arguments, we get that $a$ and $\bx$ do not strongly operator commute. 
\end{example}

The next result shows the converse of Theorem\,\ref{th:strongly} when $\Omega$ is an eigenvalue orbit. 
\begin{proposition}\label{strongglob}
Let $F:\mathcal V\to\mathbb R$ be a Schur-convex spectral function. Let $a,\,b\in \VV$, and let $\bx\in [b]$. Then, the following statements are satisfied:
\begin{enumerate}
\item[(a)] If $a$ and $\bx$ strongly operator commute, then $\bx$ is a global minimizer of $F(x-a)$ for $x\in [b].$
\item[(b)] If $-a$ and $\bx$ strongly operator commute, then $\bx$ is a global maximizer of $F(x-a)$ for $x\in [b].$
\end{enumerate}
\end{proposition}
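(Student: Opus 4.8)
The plan is to run the argument of Theorem~\ref{th:strongly} in reverse: once we know that $a$ (resp. $-a$) and $\bx$ strongly operator commute, Proposition~\ref{lidskiistrong} pins down $\lambda(\bx-a)$ exactly, and then Lidskii's inequality \eqref{lidskii} (resp. Ky Fan's inequality \eqref{fan}) together with the Schur-convexity of the symmetric function $f$ associated to $F$ shows that $F(\bx-a)$ is a lower (resp. upper) bound for $F(x-a)$ over the whole orbit $[b]$. The key structural fact we exploit is that every $x\in[b]$ has $\lambda(x)=\lambda(b)=\lambda(\bx)$, so the ``reference vector'' in the majorization estimates is the same for all competitors.

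For part (a), assume $a$ and $\bx$ strongly operator commute. By the equivalence $(a)\Leftrightarrow(c)$ in Proposition~\ref{lidskiistrong} we have $\lambda(\bx-a)=(\lambda(\bx)-\lambda(a))^{\downarrow}$. Now fix an arbitrary $x\in[b]$; since $\lambda(x)=\lambda(b)=\lambda(\bx)$, Lidskii's inequality \eqref{lidskii} applied to $x$ and $a$ gives
\[ \lambda(\bx-a)=(\lambda(\bx)-\lambda(a))^{\downarrow}=(\lambda(x)-\lambda(a))^{\downarrow}\prec\lambda(x-a), \]
where in the last step we use that majorization is insensitive to rearrangements of the majorized vector. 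Writing $F=f\circ\lambda$ with $f$ symmetric and Schur-convex, we conclude $F(\bx-a)=f(\lambda(\bx-a))\le f(\lambda(x-a))=F(x-a)$. As $x\in[b]$ was arbitrary, $\bx$ is a global minimizer of $F(x-a)$ on $[b]$.

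For part (b), assume $-a$ and $\bx$ strongly operator commute. By the equivalence $(a)\Leftrightarrow(b)$ in Proposition~\ref{lidskiistrong}, applied to the pair $\bx$ and $-a$, we get $\lambda(\bx-a)=\lambda(\bx+(-a))=\lambda(\bx)+\lambda(-a)$. For any $x\in[b]$, Ky Fan's inequality \eqref{fan} applied to $x$ and $-a$ yields
\[ \lambda(x-a)=\lambda(x+(-a))\prec\lambda(x)+\lambda(-a)=\lambda(\bx)+\lambda(-a)=\lambda(\bx-a), \]
again using $\lambda(x)=\lambda(\bx)$. Schur-convexity of $f$ then gives $F(x-a)\le F(\bx-a)$ for every $x\in[b]$, i.e. $\bx$ is a global maximizer.

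I do not expect a genuine obstacle here: this is the converse of Theorem~\ref{th:strongly}, and unlike that theorem it needs no strict Schur-convexity, no optimality hypothesis on $\bx$, and no extraction of a commuting Jordan frame --- only the easy implication of Proposition~\ref{lidskiistrong} (strong commutation $\Rightarrow$ the eigenvalue identity) together with ordinary Schur-convexity. The only points requiring care are bookkeeping ones: recalling that $\bx\in[b]$ forces $\lambda(\bx)=\lambda(b)$, and that $u\prec v$ depends only on $u^{\downarrow}$ and $v^{\downarrow}$, so that substituting $\lambda(x)$ for $\lambda(\bx)$ inside the majorization relations is legitimate.
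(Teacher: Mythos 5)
Your proof is correct and follows essentially the same route as the paper: exploit that $\lambda(x)=\lambda(\bx)$ for all $x\in[b]$, use Lidskii (resp. Ky Fan) to get the majorization $\lambda(\bx-a)\prec\lambda(x-a)$ (resp. the reverse), and conclude by Schur-convexity of $f$. The only cosmetic difference is that you invoke Proposition~\ref{lidskiistrong} for the identity $\lambda(\bx-a)=(\lambda(\bx)-\lambda(a))^{\downarrow}$ (resp. $\lambda(\bx)+\lambda(-a)$), whereas the paper rederives it directly from a common Jordan frame; both are valid.
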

\begin{proof}
    $(a)$. Suppose that  $a$ and $\bx$ strongly operator commute, then there exists a Jordan frame $\{c_1,\ldots,c_n\}$ such that
    $$a=\sum_{i=1}^n\lambda_i(a)c_i\quad\mbox{and}\quad \bx=\sum_{i=1}^n\lambda_i(\bx)c_i.$$
    Then $\bx-a=\sum_{i=1}^n(\lambda_i(\bx)-\lambda_i(a))c_i$ which implies that for all $x\in[b]$,
    \begin{equation*}
        \lambda(\bx-a)=(\lambda(\bx)-\lambda(a))^\downarrow=(\lambda(x)-\lambda(a))^\downarrow,
    \end{equation*}
    where the second equality is because $\lambda(x)=\lambda(b)=\lambda(\bx)$ since $x$ and $\bx$ remain in the orbit $[b]$. Then, From the Lidskii's inequality \eqref{lidskii}, we get
    \begin{equation}\label{lidskii1}
    \lambda(\bx-a)=(\lambda(x)-\lambda(a))^\downarrow\prec\lambda(x-a),
    \end{equation}
    for all $x\in[b]$. By definition, the fact that $F$ is a Schur-convex spectral function means that its associated symmetric function $f:\mathbb R^n\to\mathbb R$ is Schur-convex. Then, from \eqref{lidskii1} we obtain
    $$F(\bx-a)=f(\lambda(\bx-a))\leq f(\lambda(x-a))=F(x-a),$$
    for all $x\in[b]$. It means that $\bx$ is a global minimizer of $F(x-a)$ for $x\in [b].$

    \noindent $(b).$ The proof is analogous to the previous item. Indeed, by assuming that $-a$ and $\bx$ strongly operator commute we can deduce that for all $x\in[b]$,
    \begin{equation*}
        \lambda(\bx-a)=\lambda(\bx)+\lambda(-a)=\lambda(x)+\lambda(-a).
    \end{equation*}
    Then, the Ky Fan's inequality \eqref{fan} implies that
  $  \lambda(x-a)\prec\lambda(x)+\lambda(-a)=\lambda(\bx-a)$, 
    for all $x\in[b]$. Thus, since $F$ is Schur-convex we conclude that $F(x-a)\leq F(\bx -a)$ for all $x\in[b]$. It means that $\bx$ is a global maximizer of $F(x-a)$ for $x\in [b].$
\end{proof}

Since $[b]_w\subseteq[b]$, the above result is still valid if we substitute $[b]$ by $[b]_w$. This is proved in the next corollary.

\begin{corollary}
    Let $F:\mathcal V\to\mathbb R$ be a Schur-convex spectral function, and let $a,\,b\in \VV$. Let $\bx\in[b]_w$ be such that $a$ (respectively, $-a$) and $\bx$ strongly operator commute. Then, $\bx$ is a global minimizer (respectively, maximizer) of $F(x-a)$ for $x\in [b]_w.$
\end{corollary}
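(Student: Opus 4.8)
The plan is to deduce the statement directly from Proposition \ref{strongglob} by an inclusion argument. The key observation, recorded already in the preliminaries, is that $[b]_w\subseteq[b]$; hence any $\bx\in[b]_w$ also lies in the eigenvalue orbit $[b]$, so Proposition \ref{strongglob} is applicable to $\bx$ with the eigenvalue orbit $[b]$ playing the role of the feasible set.

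First I would invoke Proposition \ref{strongglob}(a): since $a$ and $\bx$ strongly operator commute and $\bx\in[b]$, the proposition yields $F(\bx-a)\le F(x-a)$ for \emph{all} $x\in[b]$. In particular this inequality holds for every $x$ in the smaller set $[b]_w$. Combining this with the fact that $\bx\in[b]_w$, we conclude that $\bx$ is a global minimizer of $F(x-a)$ for $x\in[b]_w$. The maximizer case is identical, using Proposition \ref{strongglob}(b) with $-a$ in place of $a$ and reversing the inequality.

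There is essentially no genuine obstacle here: the only point worth making explicit is that restricting a global minimizer from a larger set to a feasible subset that still contains the minimizer preserves global minimality, together with the inclusion $[b]_w\subseteq[b]$. One could also give a self-contained proof by repeating the Lidskii/Ky Fan argument of Proposition \ref{strongglob} verbatim (using $\lambda(x)=\lambda(b)=\lambda(\bx)$ for $x\in[b]_w$), but deriving it as a corollary of the already-established proposition is cleaner and is the approach I would take.
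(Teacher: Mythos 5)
Your argument is correct and is essentially identical to the paper's own proof: both apply Proposition \ref{strongglob} via the inclusion $[b]_w\subseteq[b]$ and then restrict the global minimality (resp. maximality) from $[b]$ to the subset $[b]_w$ containing $\bx$. No issues.
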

\begin{proof}
   Let $\bx\in [b]_w$ be such that it strongly operator commutes with $a$. Since 
   $[b]_w\subseteq[b]$ we have that $\bx\in[b]$, and by Theorem \ref{strongglob} we deduce that $\bx$ is a global minimizer of $F(x-a)$ for $x\in [b].$ Furthermore, the inclusion $[b]_w\subseteq[b]$ also implies that $\bx$ is a global minimizer of $F(x-a)$ for $x\in [b]_w.$ The proof for the case $\bx$ and $-a$ strongly operator commute is analogous.
\end{proof}

\section{A general commutation principle}\label{general comm}


\begin{theorem}\label{teo local Lidskii in EJAs tutti}
Let $\mathcal V$ be an EJA and let $a\in \mathcal V$. Let  $F:\VV\rightarrow \mathbb R$ be a strictly Schur-convex spectral function, and let $\Omega\subset\VV$ be a weakly spectral set. Let $\bar x$ be a local optimizer (minimizer or maximizer) of $F(x-a)$ for $x\in\Omega$. Then $a$ and  $\bx$ operator commute.
\end{theorem}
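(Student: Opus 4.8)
The plan is to reduce the statement to the case $\Omega=[\bx]_w$, where $[\bx]_w:=\{X\bx:X\in\AutV\}$ is the automorphism orbit of $\bx$, and then exploit the Lie group structure of $\AutV$ together with the Lidskii inequality. Since $\Omega$ is weakly spectral we have $[\bx]_w\subseteq\Omega$, and any neighbourhood of $\bx$ in $\Omega$ meets $[\bx]_w$ in a neighbourhood of $\bx$ in $[\bx]_w$; hence $\bx$ is also a local optimizer of $F(x-a)$ for $x\in[\bx]_w$. The orbit $[\bx]_w$ is the orbit of the compact Lie group $\AutV$, whose Lie algebra is $\DerV$; it is therefore a compact embedded submanifold of $\VV$ with tangent space $\{D\bx:D\in\DerV\}$ at $\bx$. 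I would use the following standard facts: (i) every automorphism preserves eigenvalues, so $\lambda(\exp(tD)\bx)=\lambda(\bx)$ for all $t$ and, consequently, $\tr\big(\exp(tD)\bx-a\big)$ is independent of $t$; (ii) each $D\in\DerV$ is skew-symmetric for the trace inner product, i.e. $\langle Du,v\rangle=-\langle u,Dv\rangle$ for all $u,v\in\VV$; and (iii) $[L_b,L_c]\in\DerV$ for all $b,c\in\VV$, while by definition $b$ and $c$ operator commute if and only if $[L_b,L_c]=0$.

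Suppose, towards a contradiction, that $a$ and $\bx$ do not operator commute, so that $D:=[L_a,L_{\bx}]$ is a nonzero derivation. Consider the smooth curve $\gamma(t):=\exp(tD)\bx\in[\bx]_w$ with $\gamma(0)=\bx$. Since $\exp(tD)$ is an automorphism, $\gamma(t)-a=\exp(tD)\big(\bx-\exp(-tD)a\big)$; writing $a(t):=\exp(-tD)a\in[a]_w$ we get
\[
F\big(\gamma(t)-a\big)=f\big(\lambda(\bx-a(t))\big),\qquad \tr\big(\bx-a(t)\big)=\tr(\bx-a)\ \text{ for all }t ,
\]
with $a(0)=a$ and $a'(0)=-Da$. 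Thus it suffices to show that $t=0$ is not a local extremizer of $t\mapsto f\big(\lambda(\bx-a(t))\big)$, which will contradict the hypothesis on $\bx$.

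The crux is to establish that, near $t=0$, the curve $\bx-a(t)$ is \emph{strictly monotone in the majorization order}: for all small $t>0$ one has
\[
\bx-a(t)\ \prec\ \bx-a\ \prec\ \bx-a(-t),
\]
with strict majorization in both places. Granting this, the strict Schur-convexity of $f$ yields $F(\gamma(t)-a)<F(\bx-a)<F(\gamma(-t)-a)$ for all small $t>0$, so $\bx$ is neither a local minimizer nor a local maximizer of $F(x-a)$ on $[\bx]_w$ — the desired contradiction. The first-order content of the claim is routine and fixes the orientation: using skew-symmetry of $D$ and $\langle a,Da\rangle=0$,
\[
\frac{d}{dt}\Big|_{t=0}\tfrac12\|\bx-a(t)\|^2=\langle\bx-a,Da\rangle=\langle\bx,Da\rangle=\|\bx\circ a\|^2-\langle\bx\circ\bx,\,a\circ a\rangle ,
\]
and by the Jordan-algebra inequality $\|b\circ c\|^2\le\langle b\circ b,\,c\circ c\rangle$, with equality if and only if $b$ and $c$ operator commute (as one checks from the Peirce decomposition), this derivative is strictly negative; hence $\|\bx-a(t)\|^2$ strictly decreases for small $t>0$ while $\tr(\bx-a(t))$ stays fixed.

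I expect the main obstacle to be precisely the upgrade from this first-order, $2$-norm information to a genuine \emph{strict majorization} comparison of the full eigenvalue vectors $\lambda(\bx-a(t))$ and $\lambda(\bx-a)$ for all small $t\ne0$: two vectors with the same sum and a strictly smaller $2$-norm need not be comparable in the majorization order, so one cannot conclude the claim from the displayed derivative alone. Resolving this should require invoking the Lidskii inequality \eqref{lidskii} together with the spectral (Peirce) structure of $a$ — in effect choosing the one-parameter subgroup $\exp(tD)$ so that passing from $\bx-a$ to $\bx-a(t)$ performs the kind of averaging that produces honest (strict) majorization — and it is here that the strictly Schur-convex hypothesis on $F$ (rather than mere strict convexity) and the majorization machinery in EJAs developed in \cite{Massey,JJL2020} are essential.
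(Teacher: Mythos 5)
Your setup (reduction to the orbit $[\bx]_w$, contradiction via a nearby orbit point whose difference with $a$ is strictly majorized by $\bx-a$) is exactly the paper's strategy, and your first-order computation along $\gamma(t)=\exp(tD)\bx$ with $D=[L_a,L_{\bx}]$ is correct. But the proof is not complete: the step you yourself flag as "the main obstacle" — upgrading the strict decrease of $\|\bx-a(t)\|^2$ at fixed trace to the strict majorization $\lambda(\bx-a(t))\prec\lambda(\bx-a)$, $\lambda(\bx-a(t))\neq\lambda(\bx-a)$ — is precisely the content of the theorem's proof, and it is not obtained by "choosing the one-parameter subgroup suitably." A single curve $\exp(tD)\bx$ has no reason to be majorization-monotone (and you would additionally need it to majorize in opposite directions for $t>0$ and $t<0$ to kill both the minimizer and maximizer cases at once); a 2-norm decrease with constant trace gives no majorization comparison, and since a strictly Schur-convex $F$ need not be differentiable or even continuous, no first-order argument on $F$ itself is available.

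The paper closes this gap by a genuinely different mechanism, which you would need to reproduce. First (Lemma \ref{lem local Lidskii1}), one shows that the \emph{two-sided} map $\Psi(X,Y)=Xx-Ya$ on $\AutV\times\AutV$ is a submersion at $(I,I)$ onto the trace slice $\VV_t$ precisely when $a$ and $x$ are \emph{totally noncommuting} (not simultaneously block decomposable); the relevant computation is that failure of surjectivity of $(D,E)\mapsto Dx-Ea$ produces a nonzero trace-zero $f$ operator commuting with both $a$ and $x$, via the characterization \eqref{commute der}. Then, rather than proving that a given curve decreases in majorization, one \emph{prescribes the target}: on the Jordan frame of $x-a$ one builds $d(s)$ with eigenvalues $(1-s)\lambda(x-a)+s(\lambda(x)-\lambda(a))^\downarrow$, which by Lidskii's inequality \eqref{lidskii} is strictly majorized by $\lambda(x-a)$ for $s>0$, and uses the open-mapping property of the submersion to realize $d(s)=Xx-Ya$ with $(X,Y)$ near $(I,I)$, whence $Y^{-1}Xx\in[x]_w$ is the desired nearby point (Lemma \ref{pro preparation strong commute}; the maximizer case uses Ky Fan's inequality and the target $(\lambda(x)-\lambda(a)^\uparrow)^\downarrow$). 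Finally, since the submersion fails when $a$ and $x$ are merely noncommuting but simultaneously block decomposable, an induction on a minimal idempotent $q$ with $a,x\in\VV(q,1)\oplus\VV(q,0)$ reduces the general noncommuting case to the totally noncommuting one inside $\VV(q,1)$ (Lemma \ref{pro preparation strong commute2}). None of these three ingredients is present in your proposal, so as written the argument has a genuine gap at its central step.
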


The proof of Theorem \ref{teo local Lidskii in EJAs tutti} will be divided into several lemmas. Essentially, we will show that the assumption that $a$ and $\bx$ do not operator commute will lead us to the conclusion that $\bx$ can not be a local optimizer of $F(x-a)$ for $x\in[b]_w$.

Let $a,b\in\VV$. Recall that the property that $a$ and $b$ operator commute is equivalent to the property that $a$ and $b$ are simultaneously (spectral) decomposed; that is, there exists a Jordan frame $\{c_1,\ldots,c_n\}$ such that $a,b\in\VV(c_1,1)\oplus\cdots\oplus\VV(c_n,1)$. If $a$ and $b$ do not operator commute, in some cases, they can still be simultaneously decomposed but in blocks; that is, there exist non-zero idempotents $\{p_1,\ldots,p_k\}$ (for some $2\leq k\leq n$) such that $p_i\circ p_j=0$ if $i\neq j$, $p_1+\ldots+p_k=e$ (which means that $\{p_1,\ldots,p_k\}$ is a complete system of orthogonal idempotents) and such that $a,b\in  \VV(p_1,1)\oplus\cdots\oplus\VV(p_k,1)$. If this is not the case, we say that $a$ and $b$ are totally noncommuting. This concept will be key in our forthcoming lemmas. Below, we give a precise definition.
\begin{definition}
Let $\VV$ be an EJA and let $a,\,b\in\VV$. We say that $a$ and $b$ are \emph{simultaneously block decomposable} if there exists an idempotent $p\notin\{0,e\}$ such that $a,b\in\VV(p,1)\oplus\VV(p,0)$. We say that $a$ and $b$ are \emph{totally (operator) noncommuting } if they are not simultaneously block decomposable.
\end{definition}

We also need to recall some tools from differential geometry. Let $\mathcal N$ and $\MM$ be smooth manifolds embedded into finite dimensional real inner product spaces $\mathcal U$ and $\mathcal W$, respectively. For $u\in\mathcal N$, we denote by $T_u(\mathcal N)$ to the tangent space of $\mathcal N$ at $u$. A smooth map $\Psi:\mathcal N\to\MM$ is a \emph{submersion} at $u\in\mathcal N$ if its differential $D\Psi_u:T_u(\mathcal N)\to T_{\Psi(u)}(\MM)$ is a surjective linear map. 

It is known that $\AutV$ has a smooth manifold (Lie group) structure whose tangent space at the identity (Lie algebra) is $\DerV$. Recently, in \cite[Proposition\,2]{JS2024}, it has been proved that the operator commutativity property can be characterized as follows:
\begin{equation}\label{commute der}
\text{$a$ and $b$ operator commute}\quad\Leftrightarrow\quad \langle Da,b\rangle=0\text{ for all }D\in\DerV.
\end{equation}

In what follows we consider $\mathcal V_t:=\{x\in\mathcal V\, : \ \tr(x)=t\}$, for some $t\in \mathbb R$. Notice that 
$\mathcal V$ is a translation of an hyperplane in $\mathcal V$ and has a natural sub-manifold structure; moreover, we have that $T_{a}(\mathcal V_t)=\mathcal V_0=\{x\in\mathcal V\, : \ \tr(x)=0\}$, for every $a\in \mathcal V_t$.

\begin{lemma}\label{lem local Lidskii1}
Let $\mathcal V$ be an EJA. Fix $a,\,b\in \mathcal V$ and set $t=\tr(a-b)$. Let $\Psi=\Psi_{(a,b)}:\text{Aut}(\mathcal V)\times \text{Aut}(\mathcal V)\rightarrow \mathcal V_t$ be given by $\Psi(X,Y)=Xa-Yb$. Then $\Psi$ is a submersion at $(I,I)$ if and only if $a$ and $b$ are totally noncommuting.
\end{lemma}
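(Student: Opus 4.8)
The key is to compute the differential $D\Psi_{(I,I)}:\DerV\times\DerV\to\VV_0$ explicitly and identify its image. Since $\AutV$ is a Lie group with Lie algebra $\DerV$, a tangent vector at $(I,I)$ is a pair $(D_1,D_2)$ with $D_j\in\DerV$, realized by the curve $t\mapsto(\exp(tD_1),\exp(tD_2))$. Differentiating $\Psi(\exp(tD_1),\exp(tD_2))=\exp(tD_1)a-\exp(tD_2)b$ at $t=0$ gives $D\Psi_{(I,I)}(D_1,D_2)=D_1a-D_2b$. Hence $\operatorname{im}D\Psi_{(I,I)}=\DerV\cdot a+\DerV\cdot b=\{Da:D\in\DerV\}+\{Db:D\in\DerV\}$, a subspace of $\VV_0$ (each $Da$ has trace zero since derivations kill the trace — this follows from $D e=0$ and the fact that derivations are skew-symmetric with respect to the trace form, so $\tr(Da)=\langle Da,e\rangle=-\langle a,De\rangle=0$). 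So $\Psi$ is a submersion at $(I,I)$ if and only if $\DerV\cdot a+\DerV\cdot b=\VV_0$.

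The heart of the proof is therefore the identification
\[
\DerV\cdot a+\DerV\cdot b=\VV_0\quad\Longleftrightarrow\quad a\text{ and }b\text{ are totally noncommuting.}
\]
For this I would work with the orthogonal complement. By skew-symmetry of derivations, $z\perp\DerV\cdot a$ (with respect to the trace inner product) means $\langle z,Da\rangle=0$ for all $D\in\DerV$, i.e. $\langle Dz,a\rangle=0$ for all $D$, which by the characterization \eqref{commute der} is exactly the statement that $z$ and $a$ operator commute; likewise for $b$. Thus $(\DerV\cdot a+\DerV\cdot b)^\perp$ inside $\VV$ equals $\{z\in\VV:z\text{ operator commutes with both }a\text{ and }b\}$, and the subspace is all of $\VV_0$ precisely when this common commutant is contained in $\RR e$ (since $\VV_0^\perp=\RR e$ and $e$ commutes with everything). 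So the lemma reduces to the purely algebraic claim: the common operator-commutant of $a$ and $b$ equals $\RR e$ if and only if $a$ and $b$ are totally noncommuting.

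For the algebraic equivalence: if $a$ and $b$ are simultaneously block decomposable via an idempotent $p\notin\{0,e\}$, then $p$ operator commutes with both $a$ and $b$ (since $a,b\in\VV(p,1)\oplus\VV(p,0)$, and every element of this subalgebra operator commutes with $p$), and $p\notin\RR e$, so the common commutant is strictly larger than $\RR e$. Conversely, suppose some $z\notin\RR e$ operator commutes with both $a$ and $b$; write $z=\sum_i\mu_i q_i$ in spectral form with the $\mu_i$ the distinct eigenvalues and $q_i$ the corresponding (nontrivial) idempotents. Since $z\notin\RR e$ there are at least two distinct eigenvalues; pick any one eigenvalue, let $p=q_i$ be its spectral idempotent, so $p\notin\{0,e\}$. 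The standard fact that operator-commuting elements are simultaneously Jordan-frame diagonalizable, applied to the pair $(z,a)$ and the pair $(z,b)$, together with the fact that the spectral idempotents of $z$ are polynomials in $z$, shows $a$ and $b$ each lie in $\VV(p,1)\oplus\VV(p,0)$ (they commute with every spectral idempotent of $z$, hence with $p$); thus $a$ and $b$ are simultaneously block decomposable. The main obstacle is making this last step fully rigorous — one must argue carefully that "$a$ operator commutes with $z$" forces "$a$ operator commutes with each spectral idempotent $q_i$ of $z$", which follows because $q_i$ is a polynomial in $z$ and operator commutativity with $z$ implies operator commutativity with any polynomial in $z$ (all such polynomials lie in the associative subalgebra generated by $z$ and $e$, on which $L$ is a representation). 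Once this is in place, the two directions combine to give the stated equivalence, completing the proof.
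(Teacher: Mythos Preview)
Your proposal is correct and follows essentially the same route as the paper's proof: compute the differential $D\Psi_{(I,I)}(D_1,D_2)=D_1a-D_2b$, pass to the orthogonal complement of the image, use the characterization \eqref{commute der} to identify that complement with the set of elements operator commuting with both $a$ and $b$, and then go back and forth between such an element and a nontrivial idempotent via the spectral decomposition. The only cosmetic differences are that the paper works directly with a nonzero $f\in\VV_0$ in the complement (rather than phrasing it as the common commutant being $\RR e$), and that you supply the justification, via polynomials in $z$ or simultaneous diagonalization, for why the spectral idempotents of $z$ inherit operator commutativity with $a$ and $b$, a step the paper simply asserts.
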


\begin{proof}
Observe that $T_{(I,I)}(\rm Aut(\VV)\times\AutV)=\DerV\times\DerV$ and $T_{\Psi(I,I)}(\VV_t)=T_{a-b}(\VV_t)=\VV_0$. Then, the differential map of $\Psi$ at $(I,I)$ is given by
$$D\Psi:\DerV\times\DerV\to\VV_0,\;D\Psi(D,E)=Da-Eb.$$
By definition, $\Psi$ is a submersion at $(I,I)$ if and only if the differential map $D\Psi$ is surjective.
Hence, $\Psi$ is not a submersion if and only if there exists
$f\in \mathcal V_0$, $f\neq 0$ such that $f$ is orthogonal to the range of $D\Psi$.
In turn, this last condition is equivalent to the fact that 
\begin{equation}\label{eq lem submers}
\langle Da, f\rangle=\langle Db, f\rangle=0\quad \text{for every}\quad D\in\text{Der}(\mathcal V)\,,
\end{equation}
which means, because of \eqref{commute der}, that $f$ operator commutes with $a$ and $b$ simultaneously. Let $\gamma_1,\ldots,\gamma_k$ be the distinct eigenvalues of $f$. We know from the first version of the spectral theorem \cite[Theorem III.1.1]{FK} that there exists a unique complete system of orthogonal idempotents $c_1,\ldots,c_k$ such that 
\begin{equation*}\label{unique}
f=\sum_{i=1}^k \gamma_i c_i.
\end{equation*}
Notice that since $f\neq 0$ and $\tr(f)=0$ then $f$ should have at least two different eigenvalues which implies that $c_i\neq e$ for all $i$. Thus, since $f$ operator commutes with $a$ and $b$, we deduce that  $c_1$ also operator commutes with $a$ and $b$. Therefore, $a,b\in\VV(c_1,1)\oplus\VV(c_1,0)$ with $c_1\neq 0$ and $c_1\neq e$. This proves that $a$ and $b$ are simultaneously block decomposable, which means that they are not totally noncommuting. 

Conversely, suppose that $a$ and $b$ are simultaneously block decomposable. Then, there is an idempotent $p\notin\{0,e\}$ such that $a,b\in\VV(p,1)\oplus\VV(p,0)$.  If we let $f=\tr(p)^{-1}\,p-\tr(e-p)^{-1} (e-p)$ then $f$ is a nonzero element in $\VV_0$. Furthermore, relation \eqref{eq lem submers} holds because $\langle Da,p\rangle=\langle Db,p\rangle=0$, since $p$ operator commutes with $a$ and $b$, and $\langle Da,e\rangle=\langle Db,e\rangle=0$, since $e$ operator commutes with each element in $\VV$. Therefore, $f$ is orthogonal to $D\Psi(\DerV\times\DerV)$. The converse now follows from these facts.
\end{proof}

\begin{lemma}\label{pro preparation strong commute}
Let $\mathcal V$ be an EJA, and let $a,\,x\in \mathcal V$.  Suppose that $a$ and $x$ are totally noncommuting. Then, for every $\epsilon >0$, the following statements are satisfied:

\begin{enumerate}
\item [(a)]  there exists $x_\varepsilon\in [x]_w$ such that
$\|x_\varepsilon-x\|\leq \varepsilon$,  
$\lambda(x_\varepsilon-a)\prec \lambda(x-a)$ and $\lambda(x_\varepsilon-a)\neq \lambda(x-a)$;
\item [(b)] there exists $y_\varepsilon\in [x]_w$ such that
$\|y_\varepsilon-x\|\leq \varepsilon$,  
$\lambda(x-a)\prec \lambda(y_\varepsilon-a)$ and $\lambda(y_\varepsilon-a)\neq \lambda(x-a)$.
\end{enumerate}
\end{lemma}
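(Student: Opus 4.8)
The plan is to prove (a) and (b) together by a perturbation argument that couples the submersion property from Lemma~\ref{lem local Lidskii1} with an elementary ``Robin Hood'' (T-transform) modification of the eigenvalues of $h:=x-a$. The underlying idea is the identity $\lambda\big((Y^{-1}X)x-a\big)=\lambda(Xx-Ya)$ for $X,Y\in\AutV$, which says that, at the level of the eigenvalues of the difference with $a$, perturbing $x$ inside its orbit $[x]_w$ amounts to applying the two-sided action $(X,Y)\mapsto Xx-Ya$; the latter is controllable because, by Lemma~\ref{lem local Lidskii1}, total noncommutativity makes it a submersion at $(I,I)$.

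First I would record two preliminary facts. Since $a$ and $x$ are totally noncommuting, they do not operator commute; in particular $x-a$ is not a scalar multiple of $e$ (if $x=a+ce$, then $L_x=L_a+cI$ commutes with $L_a$), so $\lambda(h)$ has at least two distinct entries. Fix a Jordan frame $\{f_1,\dots,f_n\}$ with $h=\sum_{k=1}^{n}\lambda_k(h)\,f_k$ and an index $i$ with $\lambda_i(h)>\lambda_{i+1}(h)$. Also, since being totally noncommuting is symmetric in its two arguments, Lemma~\ref{lem local Lidskii1} applies to the pair $(x,a)$: with $s:=\tr(x-a)$, the map $\Psi:=\Psi_{(x,a)}:\AutV\times\AutV\to\VV_s$, $\Psi(X,Y)=Xx-Ya$, is a submersion at $(I,I)$, hence open at $(I,I)$ (a standard property of submersions), and $\Psi(I,I)=h$.

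Now fix $\varepsilon>0$. I would choose a neighbourhood $U$ of $(I,I)$ small enough that $\|(Y^{-1}X)x-x\|\le\varepsilon$ for all $(X,Y)\in U$ (possible since $(X,Y)\mapsto(Y^{-1}X)x$ is continuous with value $x$ at $(I,I)$, using that automorphisms are isometries for the trace inner product). By openness of $\Psi$ at $(I,I)$, the set $N:=\Psi(U)$ is a neighbourhood of $h$ in $\VV_s$. For (a), for $\delta>0$ set $h_\delta:=h-\delta f_i+\delta f_{i+1}\in\VV_s$; then $\|h_\delta-h\|=\delta\sqrt{2}\to0$, and for $0<\delta<\tfrac12\big(\lambda_i(h)-\lambda_{i+1}(h)\big)$ the vector $\lambda(h_\delta)$ is obtained from $\lambda(h)$ by a single T-transform, so $\lambda(h_\delta)\prec\lambda(h)$ and $\lambda(h_\delta)\neq\lambda(h)$ (see \cite{Marshall}). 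Pick $\delta$ small enough that $h_\delta\in N$ and write $h_\delta=\Psi(X,Y)=Xx-Ya$ with $(X,Y)\in U$. Put $x_\varepsilon:=(Y^{-1}X)x\in[x]_w$; then $x_\varepsilon-a=Y^{-1}h_\delta$, so $\lambda(x_\varepsilon-a)=\lambda(h_\delta)$. Hence $\lambda(x_\varepsilon-a)\prec\lambda(x-a)$, $\lambda(x_\varepsilon-a)\neq\lambda(x-a)$, and $\|x_\varepsilon-x\|=\|(Y^{-1}X)x-x\|\le\varepsilon$, which is (a). For (b) I would run the same argument with $h_\delta$ replaced by $h'_\delta:=h+\delta f_i-\delta f_{i+1}$, which for every $\delta>0$ strictly majorizes $h$ (the reverse T-transform), producing $y_\varepsilon\in[x]_w$ with the asserted properties.

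The hard part is conceptual: one has to recognize that the prescribed T-transforms $h_\delta,h'_\delta$ — which lie near $h$ in the affine space $\VV_s$ — can indeed be realized as values $Xx-Ya$ of the two-sided action for $(X,Y)$ near $(I,I)$, which is exactly what the submersion conclusion of Lemma~\ref{lem local Lidskii1} provides through the open-mapping property of submersions, and that $x_\varepsilon:=(Y^{-1}X)x$ then lands in $[x]_w$ with the eigenvalues of $x_\varepsilon-a$ equal to those of $h_\delta$. Everything else is routine: that $x-a$ is not a multiple of $e$, the T-transform majorization statements and the norm estimate for $\|h_\delta-h\|$, and the continuity and isometry properties of the $\AutV$-action.
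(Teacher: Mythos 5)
Your argument is correct, and its engine is the same as the paper's: both apply Lemma~\ref{lem local Lidskii1} to get that $\Psi_{(x,a)}(X,Y)=Xx-Ya$ is a submersion at $(I,I)$, use the resulting local openness to realize a prescribed small perturbation $h_\delta$ of $h=x-a$ inside $\VV_s$ as $\Psi(X,Y)$ with $(X,Y)$ near $(I,I)$, and then pass to $x_\varepsilon=Y^{-1}Xx\in[x]_w$, noting $x_\varepsilon-a=Y^{-1}h_\delta$ so that $\lambda(x_\varepsilon-a)=\lambda(h_\delta)$. The one genuine difference is the choice of target. The paper moves along the segment from $\lambda(x-a)$ toward $(\lambda(x)-\lambda(a))^\downarrow$ in part (a) (resp.\ toward $\lambda(x)+\lambda(-a)$ in part (b)), and needs Lidskii's (resp.\ Ky Fan's) inequality together with Proposition~\ref{lidskiistrong} to certify that the target is strictly majorized by (resp.\ strictly majorizes) $\lambda(x-a)$. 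You instead perform a T-transform on two distinct eigenvalues of $x-a$, which requires only the elementary observation that $x-a\notin\mathbb{R}e$ (your $L_{x}=L_{a}+cI$ argument for this is valid), and the strictness of the resulting majorization is immediate since the multiset $\{\lambda_i-\delta,\lambda_{i+1}+\delta\}$ cannot equal $\{\lambda_i,\lambda_{i+1}\}$ for $0<\delta<\tfrac12(\lambda_i-\lambda_{i+1})$. Your variant is slightly more self-contained at this step; the paper's curve has the feature of heading toward the strongly commuting configuration relevant to the later theorems, but for the lemma as stated either target works. All the supporting details you cite (openness of a submersion near the point, automorphisms being trace- and norm-preserving, $\tr(h_\delta)=\tr(h)$, $\|h_\delta-h\|=\delta\sqrt2$) check out.
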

\begin{proof} 
($a$)
Assume that $a$ and $x$ are totally noncommuting. Then, it is clear that $a$ and $x$ do not operator commute. In particular, by item (c) in Proposition \ref{lidskiistrong} we get that 
$(\lambda(x)-\lambda(a))^\downarrow\neq \lambda(x-a)$.

By Lemma \ref{lem local Lidskii1} we see that the map 
$\Psi=\Psi_{(x,a)}:\text{Aut}(\mathcal V)\times \text{Aut}(\mathcal V)\rightarrow \mathcal V_t$ given by $\Psi(X,Y)=Xx-Ya$ is a submersion at $(I,I)$, where $t=\tr(x-a)=\tr(x)-\tr(a)$. Let $\{e_1,\ldots,e_n\}$ be a Jordan frame such that $x-a=\sum_{i=1}^n \lambda_i(x-a)\ e_i$. Consider the continuous curve $\alpha(\cdot):[0,1]\rightarrow (\mathbb R^n)^\downarrow$ given by $\alpha(s)=(1-s)\,\lambda(x-a)+s\,(\lambda(x)-\lambda(a))^\downarrow\in (\mathbb R^n)^\downarrow$, for $s\in [0,1]$.
We now define a continuous curve $$d(\cdot):[0,1]\rightarrow \mathcal V_t \ , \ d(s)=\sum_{i=1}^n \alpha_i(s)\ e_i\ , \ s\in [0,1]\,.
$$
Notice that by construction $d(0)=x-a$ and $\lambda(d(s))=\alpha(s)$, for $s\in [0,1]$. Furthermore, we have that $\alpha(s)\prec \lambda(x-a)$ and $\alpha(s)\neq \lambda(x-a)$, for $s\in (0,1]$. Indeed, the majorization is because $(\lambda(x)-\lambda(a))^\downarrow$ is in the convex hull of the permutations of $\lambda(x-a)$, thanks to the Lidskii's inequality, and $\alpha(s)$ is a convex combination of $\lambda(x-a)$ and $(\lambda(x)-\lambda(a))^\downarrow$. The strict majorization holds because $(\lambda(x)-\lambda(a))^\downarrow\neq \lambda(x-a)$ and $s>0$.   Since $\Psi$ is a submersion at $(I,I)$ then there exist $\eta>0$ such that the restriction 
$\Psi:O_\eta\rightarrow \Psi(O_\eta)$ to the open set $$O_\eta:=\{(X,Y)\in  \text{Aut}(\mathcal V)\times \text{Aut}(\mathcal V) \,:\ \|X-I\|,\,\|Y-I\|\leq \eta\} \ni (I,I)$$ is an open mapping onto the (relative) open set 
$\Psi(O_\eta)\subset \mathcal V_t$ (where, as before, we are considering the product structure on $\text{Aut}(\mathcal V)\times \text{Aut}(\mathcal V)$).
Let $\varepsilon>0$ and assume, without loss of generality, that $\varepsilon <\eta$; then, there exists 
an $s_0=s_0(\varepsilon)\in (0,1)$ such that $d(s)\in \Psi(O_{\hat\varepsilon})$, for $0\leq s\leq s_0$, where $\hat\varepsilon:=\min\{\frac{\varepsilon}{2}, \frac{\varepsilon}{2\|x\|}\}$; notice that we have used that $\Psi(O_{\hat\varepsilon})\subset \mathcal V_t$ is a (relative) open set and that $d(s)$ is a continuous curve such that $d(0)=x-a=\Psi(I,I)$. In particular, there exists 
$(X,Y)\in O_{\hat\varepsilon}$ such that 
$$\Psi(X,Y)=Xx-Ya=d(s_0)=\sum_{i=1}^n \alpha_i(s_0)\ e_i\,.$$
Hence, $Y^{-1}d(s_0)=Y^{-1}Xx-a$ is such that 
$\lambda(Y^{-1}Xx-a)=\lambda(d(s_0))=\alpha(s_0)\prec \lambda(x-a)$, 
$\lambda(Y^{-1}Xx-a)\neq \lambda(x-a)$ and 
$$
\|Y^{-1}Xx-x\|\leq \|Y^{-1}( Xx-x)\|+\|Y^{-1}x-x\|=\|(X-I)x\|+\|(Y-I)x\|\leq \varepsilon\,.
$$Hence, we can take $x_\varepsilon=Y^{-1}Xx\in [x]_w$.

\noindent
($b$) We argue as in the proof of the previous item. Assume that $a$ and $x$ are totally noncommuting so that $a$ and $x$ do not operator commute. In particular, by Ky Fan's inequality \eqref{fan} and Proposition \ref{lidskiistrong} we get that $\lambda(x-a)\prec\lambda(x)+\lambda(-a)$ and
$\lambda(x-a)\neq \lambda(x)+\lambda(-a)$. Then, by Lemma \ref{lem local Lidskii1} we can consider the map
$\Psi=\Psi_{(x,a)}:\text{Aut}(\mathcal V)\times \text{Aut}(\mathcal V)\rightarrow \mathcal V_t$ given by $\Psi(X,Y)=Xx-Ya$ as before, which is a submersion at $(I,I)$. Let $\{e_1,\ldots,e_n\}$ be a Jordan frame such that $x-a=\sum_{i=1}^n \lambda_i(x-a)\ e_i$. Now, we define the continuous curve $\beta(\cdot):[0,1]\rightarrow (\mathbb R^n)^\downarrow$ given by $\beta(s)=(1-s)\,\lambda(x-a)+s\,(\lambda(x)-\lambda(a)^\uparrow)^\downarrow\in (\mathbb R^n)^\downarrow$, for $s\in [0,1]$, and consider the continuous curve 
$$d(\cdot):[0,1]\rightarrow \mathcal V_t \ , \ d(s)=\sum_{i=1}^n \beta_i(s)\ e_i\ , \ s\in [0,1]\,.$$
Notice that by construction $d(0)=x-a$ and $\lambda(d(s))=\beta(s)$, for $s\in [0,1]$. Hence $\lambda(x-a)\prec\beta(s) $ and $\beta(s)\neq \lambda(x-a)$, for $s\in (0,1]$. Indeed, the majorization is because $\lambda(x-a)$ is in the convex hull of the permutations of $\lambda(x)+\lambda(-a)$, thanks to Ky Fan inequality \eqref{fan}, and $\beta(s)$ is a convex combination of $\lambda(x-a)$ and $\lambda(x)+\lambda(-a)$. The strict majorization holds because $\lambda(x)+\lambda(-a)\neq \lambda(x-a)$ and $s>0$. 
Following the same steps as item ($a$), given $\varepsilon >0$ there exist $(X,Y)\in O_{\hat\varepsilon}$ such that $$\Psi(X,Y)=Xx-Ya=d(s_0)=\sum_{i=1}^n \beta_i(s_0)\ e_i\,.$$
Hence, $Y^{-1}d(s_0)=Y^{-1}Xx-a$ is such that 
$\lambda(x-a)\prec\beta(s_0) =\lambda(d(s_0))=\lambda(Y^{-1}Xx-a)$, 
$\lambda(Y^{-1}Xx-a)\neq \lambda(x-a)$ and 
$$
\|Y^{-1}Xx-x\|\leq \|Y^{-1}( Xx-x)\|+\|Y^{-1}x-x\|=\|(X-I)x\|+\|(Y-I)x\|\leq \varepsilon\,.
$$
Hence, we can take $y_\varepsilon=Y^{-1}Xx\in [x]_w$.
\end{proof}

The next result extends Lemma \ref{pro preparation strong commute} to the case where $a$ and $b$ do not operator commute.

 \begin{remark}\label{remark x28}
 Let $\mathcal V$ be an EJA with ${\rm rank}(\mathcal V)\geq 2$ and let $0\neq p\in\mathcal V$ be an idempotent. Recall the Peirce decomposition from \eqref{peirce}, namely
$$
\VV = \VV(p,1)\oplus \VV(p,0) \oplus \VV(p,1/2),
$$
where $\VV(p,1)$ and $\VV(p,0)$ are subalgebras of $\mathcal V$. In fact, $\VV(p,1)$ can be regarded 
as an EJA with unit $p$. In this case, we can embed Aut$(\VV(p,1))$ into Aut$(\VV)$ as follows: given
$X\in$ Aut$(\VV(p,1))$ we extend the action of $X$ by constructing $\tilde X\in$ Aut$(\VV)$ such that
$\tilde X(v_1+v_0+v_{1/2})=X(v_1)+v_0+v_{1/2}$, where $v_i\in \VV(p,i)$, for $i\in \{0,1/2,1\}$.

The previous remarks show that if $x\in\VV$ operator commute with $p$, i.e. $x=\tilde x+ \hat x\in \VV(p,1)\oplus \VV(p,0)$, and $X\in$ Aut$(\VV(p,1))$ then $X(\tilde x)+\hat x=\tilde X x\in [x]_w$. We will need this last fact in the proof of the following result.
\end{remark}

\begin{lemma}\label{pro preparation strong commute2}
Let $\mathcal V$ be an EJA with ${\rm rank}(\mathcal V)\geq 2$, and let $a,\,x\in \mathcal V$ be such that 
they do not operator commute. Then, for every $\varepsilon>0$, the following statements are satisfied:
\begin{enumerate}
\item[(a)] there exists $x_\varepsilon\in [x]_w$ such that
$\|x_\varepsilon-x\|\leq \varepsilon$, 
$\lambda(x_\varepsilon-a)\prec \lambda(x-a)$ and $\lambda(x_\varepsilon-a)\neq \lambda(x-a)$.
\item[(b)]  there exists $y_\varepsilon\in [x]_w$ such that
$\|y_\varepsilon-x\|\leq \varepsilon$,  
$\lambda(x-a)\prec \lambda(y_\varepsilon-a)$ and  $\lambda(y_\varepsilon-a)\neq \lambda(x-a)$.
\end{enumerate}
\end{lemma}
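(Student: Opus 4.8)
The plan is to reduce Lemma \ref{pro preparation strong commute2} to the "totally noncommuting" case already handled in Lemma \ref{pro preparation strong commute}, by working inside an appropriate Peirce subalgebra $\VV(p,1)$. Since $a$ and $x$ do not operator commute but may still be simultaneously block decomposable, the idea is to choose the block decomposition as fine as possible: let $p\notin\{0,e\}$ be an idempotent such that $a,x\in\VV(p,1)\oplus\VV(p,0)$, chosen so that the restrictions $\tilde a,\tilde x$ of $a,x$ to the subalgebra $\VV(p,1)$ are \emph{totally noncommuting in} $\VV(p,1)$. Such a $p$ exists: among all idempotents $q$ with $a,x\in\VV(q,1)\oplus\VV(q,0)$, pick one for which the rank of $\VV(q,1)$ is minimal (note $\VV(q,1)$ still has rank $\geq 2$, since if it had rank $\leq 1$ then $\tilde a,\tilde x$ would be scalar multiples of $q$, hence $a$ and $x$ would operator commute, a contradiction); then the parts of $a,x$ in $\VV(p,1)$ cannot be further block-decomposed, i.e.\ they are totally noncommuting there.

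Write $a=\tilde a+\hat a$ and $x=\tilde x+\hat x$ with $\tilde a,\tilde x\in\VV(p,1)$ and $\hat a,\hat x\in\VV(p,0)$. First I would apply Lemma \ref{pro preparation strong commute}, inside the EJA $\VV(p,1)$ (which has rank $\geq 2$), to the totally noncommuting pair $\tilde x,\tilde a$: given $\varepsilon>0$ there is $\tilde x_\varepsilon\in[\tilde x]_w^{\VV(p,1)}$ with $\|\tilde x_\varepsilon-\tilde x\|\leq\varepsilon$, $\lambda^{\VV(p,1)}(\tilde x_\varepsilon-\tilde a)\prec\lambda^{\VV(p,1)}(\tilde x-\tilde a)$ and $\lambda^{\VV(p,1)}(\tilde x_\varepsilon-\tilde a)\neq\lambda^{\VV(p,1)}(\tilde x-\tilde a)$. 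Set $x_\varepsilon:=\tilde x_\varepsilon+\hat x$. By Remark \ref{remark x28} (the embedding $\operatorname{Aut}(\VV(p,1))\hookrightarrow\AutV$), $x_\varepsilon\in[x]_w$, and clearly $\|x_\varepsilon-x\|=\|\tilde x_\varepsilon-\tilde x\|\leq\varepsilon$. The key point is that $x_\varepsilon-a=(\tilde x_\varepsilon-\tilde a)+(\hat x-\hat a)$ is a sum of an element of $\VV(p,1)$ and an element of $\VV(p,0)$, so its eigenvalue list in $\VV$ is the (sorted) concatenation of the eigenvalue lists of the two parts in their respective subalgebras; likewise for $x-a$. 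Since the $\VV(p,0)$-part $\hat x-\hat a$ is unchanged, the passage from $\lambda^{\VV(p,1)}(\tilde x-\tilde a)$ to $\lambda^{\VV(p,1)}(\tilde x_\varepsilon-\tilde a)$ by (strict) majorization within $\VV(p,1)$ lifts to (strict) majorization of $\lambda(x_\varepsilon-a)$ by $\lambda(x-a)$ in $\VV$ — this uses the elementary fact that if $u\prec v$ in $\RR^k$ and $u\neq v$, then for any fixed $w\in\RR^m$ the concatenation $(u,w)$ is strictly majorized by $(v,w)$ in $\RR^{k+m}$ (equality of total sums is preserved, and the prefix sums of the sorted concatenation can only get larger). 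Part (b) is entirely symmetric, invoking part (b) of Lemma \ref{pro preparation strong commute} in $\VV(p,1)$ and the same concatenation principle for majorization in the other direction.

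The main obstacle — and the step requiring the most care — is the bookkeeping around eigenvalues and orbits relative to a Peirce subalgebra: one must verify that $\VV(p,1)$ is genuinely a Euclidean Jordan algebra with unit $p$ (standard, from \eqref{peirce}), that the eigenvalues of an element of $\VV(p,1)\oplus\VV(p,0)$ computed in $\VV$ are exactly the union of those computed in the two subalgebras, and that the extension $\tilde X\in\AutV$ of $X\in\operatorname{Aut}(\VV(p,1))$ from Remark \ref{remark x28} indeed keeps $\hat x$ fixed while moving $\tilde x$ within its weak orbit in $\VV(p,1)$. Once these structural facts are in place, the majorization manipulation is routine. I would also make explicit the extremal choice of $p$ and the argument that $\operatorname{rank}(\VV(p,1))\geq 2$, since that hypothesis is exactly what licenses the application of Lemma \ref{pro preparation strong commute} in the subalgebra.
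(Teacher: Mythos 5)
Your overall strategy is exactly the paper's: locate a Peirce block $\VV(p,1)$ in which the components of $a$ and $x$ are totally noncommuting, apply Lemma \ref{pro preparation strong commute} there, transport the perturbation back via the embedding of Remark \ref{remark x28}, and lift the strict majorization through concatenation of spectra. The second half of your argument (the orbit bookkeeping and the concatenation principle for majorization) is correct and is what the paper does.

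However, there is a genuine gap in your construction of $p$. You propose to take, among \emph{all} idempotents $q$ with $a,x\in\VV(q,1)\oplus\VV(q,0)$, one minimizing $\operatorname{rank}(\VV(q,1))$, and you justify $\operatorname{rank}(\VV(q,1))\geq 2$ by saying that otherwise the components of $a$ and $x$ in $\VV(q,1)$ would be scalar multiples of $q$ and hence $a$ and $x$ would operator commute. That inference is false: commutativity of the $\VV(q,1)$-components says nothing about the $\VV(q,0)$-components. For instance, if $a$ and $x$ share a single primitive idempotent $c$ in their spectral decompositions but their components in $\VV(c,0)$ do not commute, then $q=c$ is admissible for your minimization, has rank $1$, its unit-block components are scalars (which commute), and all the noncommutativity sits in $\VV(q,0)$ --- so applying Lemma \ref{pro preparation strong commute} in $\VV(q,1)$ perturbs nothing useful. (Taken literally, your class even admits $q$ with the noncommuting part entirely in the wrong block; and your requirement $p\notin\{0,e\}$ is also inconsistent with the case where $a$ and $x$ are already totally noncommuting in $\VV$, where one must take $p=e$.) The fix is the one the paper uses: restrict the minimization to the class $\mathcal P$ of nonzero idempotents $q$ such that $a,x\in\VV(q,1)\oplus\VV(q,0)$ \emph{and} the components of $a$ and $x$ in $\VV(q,1)$ do not operator commute (note $e\in\mathcal P$, so $\mathcal P\neq\emptyset$). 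A minimal-rank element of $\mathcal P$ then has rank $\geq 2$ (this the paper establishes by induction on $\operatorname{rank}(\VV)$), and minimality within $\mathcal P$ is what forces the unit-block components to be totally noncommuting, since any further simultaneous block decomposition of them would produce an element of $\mathcal P$ of strictly smaller rank. With that correction the rest of your argument goes through as written.
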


\begin{proof}
($a$)
Consider the class $\mathcal P=\mathcal P_{(a,x)}(\mathcal V)$ consisting of all nonzero idempotents $p\in\VV$ such that 

$$a,x\in\VV(p,1)\oplus\VV(p,0)\quad \mbox{and}\quad a_1\mbox{ and }x_1\mbox{ do not operator commute}, $$
where $a_1$ and $x_1$ are the components (projection) of $a$ and $x$ in $\VV(p,1)$, respectively. Observe that the unit element $e$ is always in $\mathcal P$. Under our present assumptions, we claim that there exists a minimal element 
$q\in \mathcal P$ (that is, an element of $\mathcal P$ with the minimum rank) such that $\text{rank}(q)\geq 2$. We show this claim by induction on $\text{rank}(V)$. 
If $\text{rank}(V)=2$, $e$ is minimal: otherwise, we can deduce that $a$ and $x$ operator commute, against our assumption.

Assume that our claim is true for EJAs with rank at most $n-1\geq 2$ and let $\mathcal V$ be an EJA with $\text{rank}(\mathcal V)=n$. Then, if $e$ is minimal in $\mathcal P$ we are done. Otherwise, there exists
$p\in\mathcal P$ such that $0\neq p \neq e$. Hence, $0\neq e-p\in\mathcal P$ also. If $\text{rank}(p)\geq 2$ then we can consider the EJA $\VV(p,1)$ where $a_1$ and $x_1$ do not operator commute. Since $2\leq \text{rank}(\VV(p,1))<n$, we can apply the induction hypothesis and conclude that there exists a minimal element $q\in \mathcal P_{(a_1,x_1)}(\VV(p,1))$. It follows that $q$ has the required properties.
In case $\text{rank}(p)=1$ then $\text{rank}(e-p)=n-1\geq 2$ and we can replace $p$ by $e-p$ in the previous argument.

Now, let $q$ be a minimal element in $\mathcal P$, and consider the decomposition of $a$ and $x$ in the direct sum $\VV(q,1)\oplus\VV(q,0)$ as
$$a=\tilde a+\hat a,\quad x=\tilde x+\hat x.$$
Since $q$ minimal we have that $\tilde a$ and $\tilde x$ are totally noncommuting in $\VV(q,1)$. Indeed, otherwise, $\tilde a$ and $\tilde x$ are simultaneously block decomposable, which allows us to construct an element in $\mathcal P$ with rank smaller than ${\rm rank}(q)$. Take $\varepsilon>0$; by item ($a$) in Lemma \ref{pro preparation strong commute} there exists
$\tilde x_\varepsilon\in [\tilde x]_w$, where the weak orbit is relative to $\VV(q,1)$, such that 
$\|\tilde x_\varepsilon-\tilde x\|\leq \varepsilon$,
 $\lambda(\tilde x_\varepsilon-\tilde a)\prec\lambda( \tilde x-\tilde a)$ and 
$\lambda(\tilde x_\varepsilon-\tilde a)\neq \lambda(\tilde x-\tilde a)$. 
 Finally, we take $x_\varepsilon=\tilde x_\varepsilon+\hat x\in \mathcal V$; notice that by Remark \ref{remark x28} we get that $x_\varepsilon\in [x]_w$.
On the other hand,
$\|x_\varepsilon-x\|= \|\tilde x_\varepsilon-\tilde x\|\leq \varepsilon$. Finally,
$$\lambda(x_\varepsilon-a)=(\lambda(\tilde x_\varepsilon-\tilde a),\lambda(\hat x-\hat a))^\downarrow
\prec (\lambda(\tilde x-\tilde a),\lambda(\hat x-\hat a))^\downarrow=\lambda(x-a)\,.$$
Arguing as above we also see that 
$\lambda(x_\varepsilon-a)\neq \lambda(x-a)$.

\noindent ($b$) The proof is analogous to the previous item, using item ($b$) in Lemma \ref{pro preparation strong commute}.
\end{proof}

\begin{proof}[Proof of Theorem \ref{teo local Lidskii in EJAs tutti}]
Reasoning by contradiction, suppose that $a$ and $\bx$ do not operator commute. Since $\Omega$ is weakly spectral and $\bx\in\Omega$, we have that $[\bx]_w\subseteq\Omega$. From Lemma\,\ref{pro preparation strong commute2} we deduce that for every $\varepsilon>0$ there exists $x_\varepsilon\in [\bx]_w$ such that
$\|x_\varepsilon-\bx\|\leq \varepsilon$ and such that $x_\varepsilon-a$ is strictly majorized by $\bx-a$. That is, 
\begin{equation}\label{strict}
\lambda(x_\varepsilon-a)\prec \lambda(\bx-a)\quad \mbox{and}\quad \lambda(x_\varepsilon-a)\neq \lambda(\bx-a)\,.
\end{equation}
Let $f:\mathbb R^n\to\RR$ be the symmetric function associated with $F$. We have that $f$ is strictly Schur-convex because $F$ is a strictly Schur-convex spectral function. By applying this fact to the strict majorization \eqref{strict} we get that $F(x_\varepsilon-a)<F(\bx-a)$. It means that $\bx$ is not a local minimizer of $F(x-a)$ for $x\in[\bx]_w$, which implies that $\bx$ is not a local minimizer of $F(x-a)$ for $x\in\Omega$.  We arrived at a contradiction. 

\noindent Arguing as before, we can prove the analogous result for the case of a local maximizer.
\end{proof}

\begin{corollary}\label{coro local Lidskii in EJAs x28}
Let $\mathcal V$ be an EJA and let $a\in \mathcal V$. Let  $F:\VV\rightarrow \mathbb R$ be a strictly Schur-convex spectral function, and let $\Omega\subset\VV$ be a spectral set. Let $\bar x$ be a local optimizer (minimizer or maximizer) of $F(x-a)$ for $x\in\Omega$. Then $a$ and  $\bx$ operator commute.
\end{corollary}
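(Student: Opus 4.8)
The plan is simply to invoke Theorem~\ref{teo local Lidskii in EJAs tutti}, since Corollary~\ref{coro local Lidskii in EJAs x28} is its specialization from weakly spectral sets to spectral sets. First I would recall the standard fact, already recorded in Section~\ref{preli}, that every spectral set is weakly spectral: if $\Omega=\{x\in\VV:\lambda(x)\in Q\}$ for some symmetric set $Q\subseteq\RR^n$, then for every automorphism $A\in\AutV$ and every $x\in\Omega$ one has $\lambda(Ax)=\lambda(x)\in Q$, hence $Ax\in\Omega$; thus $\Omega$ is invariant under automorphisms, i.e.\ weakly spectral.

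Next I would observe that, with this observation in hand, the hypotheses of Corollary~\ref{coro local Lidskii in EJAs x28} — namely that $F:\VV\to\mathbb R$ is a strictly Schur-convex spectral function, that $\Omega$ is spectral (hence weakly spectral), and that $\bx$ is a local optimizer of $F(x-a)$ for $x\in\Omega$ — are exactly the hypotheses of Theorem~\ref{teo local Lidskii in EJAs tutti} for this $\Omega$. Applying that theorem directly gives that $a$ and $\bx$ operator commute, which is the desired conclusion. There is no real obstacle here: the entire content of the corollary is the inclusion ``spectral $\Rightarrow$ weakly spectral,'' and once that is invoked the result is immediate.
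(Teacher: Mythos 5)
Your proof is correct and matches the paper's own argument exactly: both reduce the corollary to Theorem~\ref{teo local Lidskii in EJAs tutti} via the inclusion ``spectral $\Rightarrow$ weakly spectral,'' which follows from $\lambda(Ax)=\lambda(x)$ for $A\in\AutV$. Nothing further is needed.
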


\begin{proof}
Since a spectral set $\Omega\subset \VV$ is a weakly spectral set, we now see that the result follows directly from Theorem 
\ref{teo local Lidskii in EJAs tutti}. 
\end{proof}

\section{Strong commutation principles for local extrema in simple EJAs}\label{strong local}

In this section, we present the following strong commutation principle for local minimizers in automorphism orbits of simple EJAs.  We remark that, unless stated otherwise, all EJAs $\mathcal V$ considered in this section will be simple; in particular, $[b]=[b]_w$, for $b\in\VV$.

\begin{theorem}\label{teo local Lidskii in EJAs}
Let $\mathcal V$ be a simple EJA, let  $F:\mathcal V\rightarrow \mathbb R$ be a strictly Schur-convex spectral function, and let $a,\,b\in \mathcal V$.
\begin{enumerate}
\item[(a)] If $\bar x$ is a local minimizer of $F(x-a)$ for $x\in\mathcal [b]$ then $a$ and $\bx$ strongly operator commute. 
\item[(b)] If $\bar x$ is a local maximizer of $F(x-a)$ for $x\in\mathcal [b]$ then $-a$ and $\bx$ strongly operator commute. 
\end{enumerate}
Moreover, if $\bar x$ is a local minimizer (resp. maximizer) of $F(x-a)$ for $x\in\mathcal [b]$ then it is a global minimizer (resp. maximizer).
\end{theorem}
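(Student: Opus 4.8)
The plan is to piggyback on the two commutation principles already available. Since $\VV$ is simple, $[b]=[b]_w$ is a weakly spectral set, so Theorem~\ref{teo local Lidskii in EJAs tutti} applies and gives, for a local optimizer $\bx$ of $F(x-a)$ on $[b]$, that $a$ and $\bx$ operator commute. Thus the real task is to upgrade this to \emph{strong} operator commutation; the final ``moreover'' will then drop out of Proposition~\ref{strongglob}. Fix a common Jordan frame $\{c_1,\dots,c_n\}$ with $a=\sum_k\alpha_k c_k$ and $\bx=\sum_k\beta_k c_k$, where $\alpha$ (resp.\ $\beta$) is a rearrangement of $\lambda(a)$ (resp.\ of $\lambda(b)=\lambda(\bx)$). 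Since $a$ and $\bx$ strongly operator commute iff $\langle a,\bx\rangle=\langle\lambda(a),\lambda(\bx)\rangle$, i.e.\ iff $\sum_k\alpha_k\beta_k=\sum_k\lambda_k(a)\lambda_k(b)$, the rearrangement inequality shows that $a$ and $\bx$ fail to strongly operator commute precisely when the common frame contains a \emph{discordant} pair: indices $i\neq j$ with $\alpha_i>\alpha_j$ and $\beta_i<\beta_j$. Dually, applying this to $-a$, the elements $-a$ and $\bx$ fail to strongly operator commute precisely when there is a \emph{concordant} pair $\alpha_i>\alpha_j$, $\beta_i>\beta_j$. (For $n=1$ there is nothing to prove, so assume $n\geq2$.)

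The core of the argument is a perturbation lemma: if $a$ and $\bx$ operator commute through a frame as above and $i\neq j$ satisfies $\alpha_i\neq\alpha_j$ and $\beta_i\neq\beta_j$, then there are elements of $[b]$ arbitrarily close to $\bx$ whose difference with $a$ is strictly majorized by $\bx-a$ (discordant case) or strictly majorizes $\bx-a$ (concordant case). To build these, I would work inside the Peirce subalgebra $\WW:=\VV(c_i+c_j,1)$, which has rank $2$ and unit $c_i+c_j$; since $\VV$ is simple, any two primitive idempotents of a Jordan frame are connected, so $\VV_{ij}:=\VV(c_i,1/2)\cap\VV(c_j,1/2)\neq\{0\}$ and $\WW$ is a (simple) Jordan spin factor of dimension at least $3$. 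Choose $w\in\VV_{ij}$ with $w\circ w=c_i+c_j$ and set $c_i(\theta)=\tfrac12(c_i+c_j)+\tfrac12\cos(2\theta)(c_i-c_j)+\tfrac12\sin(2\theta)w$, $c_j(\theta)=(c_i+c_j)-c_i(\theta)$; then $\{c_i(\theta),c_j(\theta)\}$ is a Jordan frame of $\WW$ reducing to $\{c_i,c_j\}$ at $\theta=0$, and $\{c_i(\theta),c_j(\theta)\}\cup\{c_k:k\neq i,j\}$ is a Jordan frame of $\VV$. Put $x(\theta):=\beta_i c_i(\theta)+\beta_j c_j(\theta)+\sum_{k\neq i,j}\beta_k c_k$, so that $x(\theta)\in[b]$, $x(0)=\bx$, and $x(\theta)\to\bx$ as $\theta\to0$.

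Writing the $\WW$-component of $x(\theta)-a$ in spin coordinates, one computes that its two eigenvalues have constant sum $\beta_i+\beta_j-\alpha_i-\alpha_j$ and spread $2\sqrt{A^2+B^2+2AB\cos(2\theta)}$ in the discordant case (resp.\ $2\sqrt{A^2+B^2-2AB\cos(2\theta)}$ in the concordant case), where $A:=|\alpha_i-\alpha_j|/2>0$ and $B:=|\beta_i-\beta_j|/2>0$; hence this spread strictly decreases (resp.\ strictly increases) for all small $\theta\neq0$. Since the remaining eigenvalues $\{\beta_k-\alpha_k:k\neq i,j\}$ of $x(\theta)-a$ coincide with those of $\bx-a$, it follows that for small $\theta\neq0$ one has $\lambda(x(\theta)-a)\prec\lambda(\bx-a)$ with $\lambda(x(\theta)-a)\neq\lambda(\bx-a)$ in the discordant case, and $\lambda(\bx-a)\prec\lambda(x(\theta)-a)$ with the two sides unequal in the concordant case. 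This is the asserted perturbation lemma.

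To conclude, suppose $\bx$ is a local minimizer of $F(x-a)$ on $[b]$; then $a$ and $\bx$ operator commute by the first paragraph, and if they failed to strongly operator commute there would be a discordant pair, so with $f$ the (strictly Schur-convex) symmetric function of $F$ the perturbation lemma yields $F(x(\theta)-a)=f(\lambda(x(\theta)-a))<f(\lambda(\bx-a))=F(\bx-a)$ with $x(\theta)\in[b]$ arbitrarily close to $\bx$, contradicting local minimality; hence $a$ and $\bx$ strongly operator commute, proving (a). Part (b) is identical, using a concordant pair and the reversed strict majorization. Finally the ``moreover'' follows from Proposition~\ref{strongglob}: once $a$ and $\bx$ (resp.\ $-a$ and $\bx$) strongly operator commute, $\bx$ is a global minimizer (resp.\ maximizer) of $F(x-a)$ on $[b]$. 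I expect the main obstacle to be precisely the perturbation lemma: the submersion technique of Section~\ref{general comm} (Lemma~\ref{pro preparation strong commute}) does not apply, since $a$ and $\bx$ already operator commute and so are not totally noncommuting, so the improving curve must be produced by hand; the structural fact that makes this possible is the simplicity of $\VV$, which forces $\VV_{ij}\neq\{0\}$ and hence makes the rank-$2$ block $\WW$ at least $3$-dimensional, leaving room to rotate $\bx$ within its automorphism orbit toward (or away from) $a$.
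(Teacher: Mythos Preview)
Your proposal is correct and follows essentially the same route as the paper: apply Theorem~\ref{teo local Lidskii in EJAs tutti} to get operator commutation, locate a discordant (resp.\ concordant) pair $i,j$ in the common frame, rotate $\bx$ within the rank-$2$ Peirce block $\VV(c_i+c_j,1)$ along a one-parameter family of Jordan frames to produce elements of $[b]$ arbitrarily close to $\bx$ whose shifted eigenvalue vector is strictly majorized by (resp.\ strictly majorizes) $\lambda(\bx-a)$, contradict local optimality via strict Schur-convexity, and finish with Proposition~\ref{strongglob}. Your rotating idempotents $c_i(\theta)=\cos^2(\theta)c_i+\sin^2(\theta)c_j+\sin\theta\cos\theta\,w$ are exactly the paper's $e_1(\theta)$ from Lemma~\ref{lem:curve}; the only cosmetic difference is that the paper carries out the rank-$2$ computation via the isomorphism $\phi:\mathcal S^2\to\DD$ and monotonicity of $\|R(\theta)\|_2^2$, whereas you work directly in spin coordinates and track the spread $2\sqrt{A^2+B^2\pm 2AB\cos(2\theta)}$, which is the same quantity in different clothing.
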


The proof of Theorem \ref{teo local Lidskii in EJAs} will require the following lemmas.

\begin{lemma}\label{lem:easy}
Let $u=(u_1,u_2),\, v=(v_1,v_2)\in\RR^2$ be such that $u_1+u_2=v_1+v_2$. If we assume that $\Vert u\Vert^2<\Vert v\Vert^2$ then $u$ is strictly majorized by $v$; that is, $u\prec v$ and $u^{\downarrow}\neq v^{\downarrow}$.
\end{lemma}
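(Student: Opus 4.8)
The plan is to reduce immediately to the case in which $u$ and $v$ are already arranged in non-increasing order. This is harmless because all the quantities appearing in the statement — the Euclidean norm $\|\cdot\|$, the coordinate sum $u_1+u_2$, and the majorization relation $\prec$ — depend only on the sorted vectors $u^\downarrow$, $v^\downarrow$. So I would assume $u_1\ge u_2$ and $v_1\ge v_2$, so that $u^\downarrow=u$ and $v^\downarrow=v$. In dimension two, and given that the total sums already coincide, the assertion ``$u$ is strictly majorized by $v$'' then amounts precisely to the single inequality $u_1<v_1$ (this is the only majorization condition left, together with $u^\downarrow\ne v^\downarrow$, which $u_1<v_1$ also provides).

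Next I would exploit the elementary identity $2\|w\|^2=(w_1+w_2)^2+(w_1-w_2)^2$, valid for every $w=(w_1,w_2)\in\RR^2$. Writing $s:=u_1+u_2=v_1+v_2$, this gives $\|u\|^2=\tfrac12\bigl(s^2+(u_1-u_2)^2\bigr)$ and likewise $\|v\|^2=\tfrac12\bigl(s^2+(v_1-v_2)^2\bigr)$. Hence the hypothesis $\|u\|^2<\|v\|^2$ is equivalent to $(u_1-u_2)^2<(v_1-v_2)^2$. Since in the sorted case $u_1-u_2\ge 0$ and $v_1-v_2\ge 0$, taking square roots yields $u_1-u_2<v_1-v_2$.

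Finally, from $u_1=\tfrac12\bigl(s+(u_1-u_2)\bigr)$ and $v_1=\tfrac12\bigl(s+(v_1-v_2)\bigr)$, the inequality just obtained gives $u_1<v_1$. Together with $u_1+u_2=v_1+v_2$ this is exactly $u\prec v$ in $\RR^2$, while the strict inequality $u_1<v_1$ shows $u^\downarrow\ne v^\downarrow$; thus $u$ is strictly majorized by $v$, as claimed. I do not expect any genuine obstacle here: the argument is a routine two-dimensional computation, and the only point that warrants a moment of care is the reduction to sorted vectors together with the observation that, in dimension two, ``$p\prec q$ with equal component sums'' is literally ``$\max p\le\max q$''.
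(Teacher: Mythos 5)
Your proof is correct. It follows the same outline as the paper's --- reduce to sorted vectors, observe that in $\RR^2$ with equal coordinate sums strict majorization is exactly the single strict inequality $u_1<v_1$ --- but the mechanism by which you obtain $u_1<v_1$ is different. The paper argues by contraposition: if $v_1\le u_1$ then $v\prec u$, and since $t\mapsto t^2$ is convex this forces $\|v\|^2\le\|u\|^2$, contradicting the hypothesis. You instead use the identity $2\|w\|^2=(w_1+w_2)^2+(w_1-w_2)^2$ to convert the norm hypothesis directly into $(u_1-u_2)^2<(v_1-v_2)^2$, hence (for sorted vectors) $u_1-u_2<v_1-v_2$ and so $u_1<v_1$. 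Your route is slightly more self-contained --- it avoids invoking the Schur-convexity of $\|\cdot\|^2$ (i.e.\ the implication ``$p\prec q$ implies $\|p\|^2\le\|q\|^2$''), which is a fact the paper uses elsewhere anyway, and it actually exhibits the hypothesis $\|u\|^2<\|v\|^2$ as \emph{equivalent} to strict majorization in this two-dimensional, equal-sum setting, rather than merely sufficient. Both arguments are equally short and valid; there is no gap in yours.
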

\begin{proof}
Let $u,v\in\mathbb R^2$ be such that $u_1+u_2=v_1+v_2$. By re-arranging the coordinates of $u$ and $v$, we can assume that $u=u^\downarrow$ and 
$v=v^\downarrow$. If $v_1\leq u_1$ then the hypothesis $u_1+u_2=v_1+v_2$ implies that $v\prec u$. As a consequence we get that $\|u\|^2\geq \|v\|^2$, since $f(x)=x^2$, $x\in \mathbb R$, is a convex function. This last fact contradicts our assumption $\|u\|^2< \|v\|^2$. Hence, we conclude that $u_1<v_1$ (and thus $u_1\neq v_1$); as before, in this case, we get that $u\prec v$. Moreover, $u^\downarrow =(u_1,u_2)\neq (v_1,v_2)=v^\downarrow$.
\end{proof}

\begin{lemma}\label{lem:curve}
Let $\VV$ be a simple Euclidean Jordan algebra of rank $2$. Let $a, x\in \VV$ be such that they operator commute.
\begin{enumerate}
\item[(a)] Suppose that $a$ and $x$ do not strongly operator commute. Then, there exists a continuous curve $\gamma:[0,\pi/2)\to [x]$ such that $\gamma(0)=x$, and $\lambda (\gamma(\theta)-a)\prec \lambda(x-a)$ and  $\lambda (\gamma(\theta)-a)\neq \lambda(x-a)$, for all $\theta\in (0,\pi/2)$.

\item[(b)] Suppose that $-a$ and $x$ do not strongly operator commute. Then, there exists a continuous curve $\gamma:[0,\pi/2)\to [x]$ such that $\gamma(0)=x$, and $\lambda(x-a)\prec \lambda(\gamma(\theta)-a)$ and  $\lambda(x-a)\neq \lambda (\gamma(\theta)-a)$, for all $\theta\in (0,\pi/2)$.
\end{enumerate}
\end{lemma}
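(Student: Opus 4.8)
The plan is to reduce to the spin-factor model of a rank-$2$ simple EJA and, in that model, to write down $\gamma$ explicitly as a rotation of the common Jordan frame of $a$ and $x$.

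\emph{Reduction and normal form.} By the classification of simple Euclidean Jordan algebras \cite{FK}, a simple EJA of rank $2$ is isomorphic to a spin factor $\RR\times\RR^m$ with $m\ge 2$ (the $2$-dimensional spin factor $\RR\times\RR$ is not simple), with Jordan product $(t,u)\circ(s,v)=(ts+\langle u,v\rangle,\,tv+su)$, unit $e=(1,0)$, trace $\tr(t,u)=2t$, and $\lambda(t,u)=(t+\|u\|,\,t-\|u\|)$; I will work in this model. Here the primitive idempotents are exactly the elements $\tfrac12\big(e+(0,n)\big)$ with $\|n\|=1$, and every Jordan frame has the form $\{\tfrac12(e+(0,n)),\,\tfrac12(e-(0,n))\}$ for such an $n$. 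Since $a$ and $x$ operator commute, I fix such a frame $\{c_1,c_2\}$ with $a=\alpha_1c_1+\alpha_2c_2$, $\alpha_1\ge\alpha_2$, and $x=\xi_1c_1+\xi_2c_2$. If $\alpha_1=\alpha_2$ then $a\in\RR e$, which strongly operator commutes with every element, contradicting the hypothesis of (a) (respectively, applying this to $-a$, the hypothesis of (b)); hence $\alpha_1>\alpha_2$. In case (a), if $\xi_1\ge\xi_2$ then $\{c_1,c_2\}$ diagonalizes $a$ and $x$ simultaneously in non-increasing order, so $a$ and $x$ strongly operator commute, which is excluded; hence $\xi_1<\xi_2$. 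In case (b), $\{c_2,c_1\}$ diagonalizes $-a$ in non-increasing order, so $\xi_2\ge\xi_1$ would make $-a$ and $x$ strongly operator commute; hence $\xi_1>\xi_2$. Writing $c_1=\tfrac12(e+(0,u))$ with $\|u\|=1$, and setting $s:=\tfrac12(\alpha_1-\alpha_2)>0$, $r:=\tfrac12|\xi_1-\xi_2|>0$, $t_0:=\tfrac12(\alpha_1+\alpha_2)$, $\bar t:=\tfrac12(\xi_1+\xi_2)$, one gets $a=(t_0,su)$, and $x=(\bar t,-ru)$ in case (a), $x=(\bar t,ru)$ in case (b).

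\emph{The curve and the majorization.} Choose a unit vector $w\in\RR^m$ with $w\perp u$ (possible since $m\ge2$), and for $\theta\in[0,\pi/2)$ put
\[
\gamma(\theta):=\big(\bar t,\ r(-\cos\theta\,u+\sin\theta\,w)\big)\ \text{ in case (a)},\qquad \gamma(\theta):=\big(\bar t,\ r(\cos\theta\,u+\sin\theta\,w)\big)\ \text{ in case (b)}.
\]
This is a continuous curve with $\gamma(0)=x$, and its $\RR^m$-component has norm $r$ for all $\theta$, so $\lambda(\gamma(\theta))=(\bar t+r,\bar t-r)=\lambda(x)$ and hence $\gamma(\theta)\in[x]$. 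Computing $\gamma(\theta)-a$, the only $\theta$-dependent term in $\|\gamma(\theta)-a\|^2$ is $\pm 2rs\cos\theta$ (up to the fixed positive constant relating the trace inner product to the Euclidean one on $\RR\times\RR^m$), with sign $+$ in case (a) and $-$ in case (b); hence, using $\|y\|^2=\|\lambda(y)\|^2$, the difference $\|\gamma(\theta)-a\|^2-\|x-a\|^2$ is a positive multiple of $\cos\theta-1$ in case (a) and of $1-\cos\theta$ in case (b). Since $r,s>0$ and $\cos\theta<1$ for $\theta\in(0,\pi/2)$, it is $<0$ in case (a) and $>0$ in case (b). As $\tr(\gamma(\theta)-a)=\tr(x)-\tr(a)=\tr(x-a)$, Lemma \ref{lem:easy} applied to $\lambda(\gamma(\theta)-a)$ and $\lambda(x-a)$ gives $\lambda(\gamma(\theta)-a)\prec\lambda(x-a)$ with $\lambda(\gamma(\theta)-a)\ne\lambda(x-a)$ in case (a), and the reverse strict majorization in case (b); this is exactly the claim.

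\emph{Main obstacle.} The computations are routine once the set-up is right; the two delicate points are (i) passing to the spin-factor model and noticing that simplicity forces $m\ge2$, which is precisely what provides the orthogonal direction $w$, and (ii) converting ``$a$ and $x$ operator commute but $a$ (resp.\ $-a$) and $x$ do not strongly operator commute'' into the exact sign pattern $\alpha_1>\alpha_2$ and $\xi_1<\xi_2$ (resp.\ $\xi_1>\xi_2$), in particular ruling out the degenerate cases where $a$ or $x$ is a scalar multiple of $e$. Should one wish to avoid the classification, the same argument runs intrinsically using a path $c_1(\theta)$ of primitive idempotents with $\langle c_1(\theta),c_1\rangle$ strictly decreasing from $1$ and the identities $\|c_i\|^2=\langle c_i,e\rangle=1$, $\|e\|^2=2$ to evaluate $\|\gamma(\theta)-a\|^2$ along $\gamma(\theta)=\xi_1c_1(\theta)+\xi_2(e-c_1(\theta))$.
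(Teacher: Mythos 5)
Your proof is correct. It proves the same statement by the same underlying geometric idea as the paper -- rotate the Jordan frame of $x$ continuously away from the common frame $\{c_1,c_2\}$ in a direction orthogonal to it, observe that the trace of $\gamma(\theta)-a$ is constant while $\|\lambda(\gamma(\theta)-a)\|^2$ is strictly monotone in $\theta$ (decreasing in case (a), increasing in case (b)), and conclude by Lemma \ref{lem:easy} -- but the set-up differs. The paper stays intrinsic: it picks $w\in\VV(e_1,1/2)$ with $w^2=e_1+e_2$ (nonzero by simplicity), forms the three-dimensional subalgebra $\DD=\RR e_1+\RR e_2+\RR w\cong\mathcal S^2$, and carries out the monotonicity computation with $2\times2$ rotation matrices and the Frobenius norm. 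You instead invoke the classification of simple rank-$2$ EJAs to pass to the spin-factor normal form $\RR\times\RR^m$, where simplicity gives $m\ge2$ and hence the orthogonal direction $w$, and the computation becomes the elementary identity $\|{\pm}r\cos\theta\,u+r\sin\theta\,w-su\|^2=r^2+s^2\pm2rs\cos\theta$. Your route buys a shorter, more transparent calculation at the cost of citing the classification theorem; the paper's route is classification-free and produces the explicit curve of primitive idempotents $e_1(\theta),e_2(\theta)$ inside $\DD$, which is essentially the intrinsic variant you sketch at the end. Your reduction of the non-strong-commutation hypotheses to the sign pattern $\alpha_1>\alpha_2$, $\xi_1<\xi_2$ (resp.\ $\xi_1>\xi_2$), including ruling out the scalar cases $a\in\RR e$ and $x\in\RR e$, is handled cleanly and matches the paper's normalization $\lambda_1>\lambda_2$, $\beta_1<\beta_2$ (resp.\ $\beta_1>\beta_2$).
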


\begin{proof} That $a$ and $x$ operator commute means that there exist a Jordan frame $\{e_1,e_2\}$ and scalars $\lambda_1$, $\lambda_2$, $\beta_1$, $\beta_2$ such that
$$a=\lambda_1 e_1 + \lambda_2 e_2\quad\mbox{and}\quad x=\beta_1 e_1+\beta_2 e_2.$$
We first prove item $(a)$. Since $a$ and $x$ do not strongly operator commute we may assume that 
$\lambda_1>\lambda_2$ and $\beta_1<\beta_2$. The fact that $\VV$ is simple ensures that there exists $w\in \mathcal V(e_1,1/2)\cap \mathcal V(e_2,1/2)$ with $\|w\|^2=2$. In this case, we have that $w^2=e_1+e_2$ (see \cite[Proposition IV.1.4.]{FK}). Let $\DD$ be the vector subspace generated by $e_1$, $e_2$ and $w$. That is,
$$\DD:=\RR e_1 +\RR e_2 +\RR w.$$
Thus, $\DD$ is a $3$ dimensional vector space and constitutes a simple Euclidean Jordan algebra of rank $2$ equipped with the inner and Jordan products inherited from $\VV$. Furthermore, $\mathcal D$ is isomorphic to $\mathcal S^2$ (space of $2\times2$ symmetric matrices) in terms of the isomorphism 
$\phi:\mathcal S^2\to \mathcal D$ given by
\begin{equation}\label{eq cons fi}
\phi\begin{pmatrix}
1&0\\
0&0
\end{pmatrix}=e_1,\quad
\phi\begin{pmatrix}
0&0\\
0&1
\end{pmatrix}=e_2,\quad
\phi\begin{pmatrix}
0&1\\
1&0
\end{pmatrix}=w\,.
\end{equation}
For $0\leq \theta< \pi/2$ we set 
\begin{align*}
&e_1(\theta)=\cos(\theta)^2 e_1+\cos(\theta)\,\sin(\theta)w+
\sin(\theta)^2 e_2, \\
&e_2(\theta)=\sin(\theta)^2 e_1-\cos(\theta)\,\sin(\theta)w+
\cos(\theta)^2 e_2 \,.
\end{align*}
Notice that $e_1(\theta),\,e_2(\theta):[0,\pi/2)\rightarrow \mathcal V$
are continuous functions such that 
$\{e_1(\theta),\,e_2(\theta)\}$ is a  Jordan frame for $\mathcal D$, for $\theta\in [0,\pi/2)$. Moreover, notice that $(e_1(0),\,e_2(0))=(e_1,\,e_2)$.
We now construct the continuous curve
\begin{equation}\label{eq cons gam}
\gamma(\theta):=\beta_1 e_1(\theta) + \beta_2 e_2(\theta),\end{equation}
for all $\theta\in [0,\pi/2)$. Observe that $\gamma(0)=x$. Let us prove that item $(a)$ is satisfied. To see this, let $D(\theta)=\phi^{-1}(\gamma(\theta)-a)\in \mathcal S^2$ so that
$\lambda(\gamma(\theta)-a)=\lambda(D(\theta))$,  for $\theta\in [0,\pi/2)$, since $\phi$ is an algebra isomorphism. 
By construction, we have that
$$ D(\theta)
=  V(\theta)
\begin{pmatrix}\beta_1 & 0 \\0 & \beta_2
\end{pmatrix}
V(\theta)^\top-\begin{pmatrix} \lambda_1 & 0 \\0 & \lambda_2
\end{pmatrix}\, \quad\text{with}\quad V(\theta)
=\begin{pmatrix}\cos(\theta) & -\sin(\theta) \\\sin(\theta) & \cos(\theta)
\end{pmatrix} \,,
$$
for all $\theta\in[0,\pi/2)$. Consider
\begin{equation}\label{eq defi R}
R(\theta):=\begin{pmatrix}
\beta_1 & 0 \\
0 & \beta_2
\end{pmatrix}-V(\theta)^\top 
\begin{pmatrix}
\lambda_1 - \lambda_2 & 0 \\
0 & 0
\end{pmatrix}V(\theta)
\end{equation}
Then,
\begin{equation}
\label{defiR}
D(\theta)=V(\theta) \, R(\theta)\, V(\theta)^\top - \lambda_2\, I_2\,.
\end{equation}
Hence, from Lemma\,\ref{lem:easy} we can deduce that $\lambda(R(\theta))\prec \lambda(R(0))$ and $\lambda(R(\theta))\neq \lambda (R(0))$, for  $\theta\in(0,\pi/2)$. Indeed, observe that $\text{tr}(R(\theta))$ is constant for every $\theta\in[0,\pi/2)$, we only need to show that the function  
$[0,\pi/2)\ni \theta\mapsto \Vert R(\theta)\Vert_2 ^2$ is strictly decreasing on $[0,\pi/2)$, where $\|X\|_2=(\lambda_1(X)^2+\lambda_2(X)^2)^{1/2}$ stands for the Frobenius norm of $X\in\mathcal S^2 $. 
%
%
%
 Indeed, since  $\lambda_1-\lambda_2>0$, we have that 
$$ V(\theta)^\top 
\begin{pmatrix}
\lambda_1 - \lambda_2 & 0 \\ 
0 & 0 
\end{pmatrix}
V(\theta) = g(\theta) g(\theta)^\top$$ where $g(\theta)=(\lambda_1-\lambda_2)^{1/2}(\cos(\theta),\sin(\theta))^\top$.
If $M\in \mathcal S^2$ is the diagonal matrix with main diagonal $(\beta_1,\,\beta_2)$ then
$R(\theta)=M-g(\theta) g(\theta)^\top$. Thus, 
$$
\Vert R(\theta)\Vert_2^2=\gamma -2\,\langle M\,g(\theta),\,g(\theta)\rangle
$$
where  $\gamma:=\|g(\theta)\|^4+\beta_1^2+\beta_2^2=(\lambda_1-\lambda_2)^2+\beta_1^2+\beta_2^2\in\mathbb R$ is a constant and 
$$
h(\theta):=\langle M\,g(\theta),\,g(\theta)\rangle =(\lambda_1-\lambda_2)\ (\cos^2(\theta) \, \beta_1+\sin^2(\theta)\,\beta_2),
$$ 
is strictly increasing in  $[0,\pi/2)$.  In fact, 
$$
h'(\theta)= (\lambda_1-\lambda_2) (\beta_2-\beta_1)\, \sin(2\,\theta) >0 \ \ \text{for}\ \ \theta\in (0,\pi/2)\,,
$$
since the function $\sin(2 \theta)$ is positive for $\theta\in (0,\pi/2)$, $\lambda_1-\lambda_2>0$ and $\beta_2-\beta_1>0$. 
Therefore, $\Vert R(\theta)\Vert_2^2 < \Vert R(0)\Vert_2^2$ for every $\theta\in(0,\pi/2)$, and Lemma\,\ref{lem:easy} implies that $\lambda(R(\theta))\prec \lambda(R(0))$ and $\lambda(R(\theta))\neq \lambda (R(0))$ for every $\theta\in(0,\pi/2)$. 
The previous facts and Eq. \eqref{defiR} imply that   
$$ 
\lambda(D(\theta))=\lambda(R(\theta))-\lambda_2\,(1,1)^\top\implies \lambda(D(\theta))\prec \lambda(D(0)) \ , \ \lambda(D(\theta))\neq \lambda(D(0)) \ ,\ \ \theta\in(0,\pi/2)\,.
$$ 
The previous facts imply that $\lambda (\gamma(\theta)-a)\prec \lambda(x-a)$ and $\lambda (\gamma(\theta)-a)\neq \lambda(x-a)$ for every $\theta\in(0,\pi/2)$.

\medskip

\noindent To prove item $(b)$ we notice that since $-a$ and $x$ do not strongly operator commute we may assume that $\lambda_1>\lambda_2$ and $\beta_1>\beta_2$. We now argue as in the first part of the proof and construct
the isomorphism $\phi:\mathcal S^2\to \mathcal D$ according to Eq. \eqref{eq cons fi}, the curve $\gamma(\theta)$ according to \eqref{eq cons gam} and similarly $D(\theta)=\phi^{-1}(\gamma(\theta)-a)$, for $\theta\in [0,\pi/2)$.
Consider $R(\theta)$ as in Eq. \eqref{eq defi R}.
Arguing as before, to check item $(b)$ we only need to show that $[0,\pi/2)\ni \theta\mapsto \Vert R(\theta)\Vert_2 ^2$ is strictly increasing on $[0,\pi/2)$. As before, we only need to check that 
$$
h(\theta):=\langle M\,g(\theta),\,g(\theta)\rangle =(\lambda_1-\lambda_2)\ (\cos^2(\theta) \, \beta_1+\sin^2(\theta)\,\beta_2),
$$ 
is strictly decreasing in  $[0,\pi/2)$; but this fact follows from the inequality
$$
h'(\theta)= (\lambda_1-\lambda_2) (\beta_2-\beta_1)\, \sin(2\,\theta) <0 \ \ \text{for}\ \ \theta\in (0,\pi/2)\,,
$$
where we used that the function $\sin(2 \theta)$ is positive for $\theta\in (0,\pi/2)$, $\lambda_1-\lambda_2>0$ and $\beta_2-\beta_1<0$ in the present case. 

\medskip
\end{proof}


\begin{proof}[Proof of Theorem \ref{teo local Lidskii in EJAs}]
$(a)$. Observe that $[b]=[b]_w$ because the algebra is simple. Since $\bx$ is a local minimizer of $F(x-a)$ for $x\in\mathcal [b]$, from the commutation principle given in Theorem\,\ref{teo local Lidskii in EJAs tutti} we deduce that $a$ and $\bx$ operator commute. 
 Then, there exists a Jordan frame
$\{e_1,\ldots,e_n\}$ and real numbers $\beta_1,\ldots,\beta_n\in \mathbb R$ such that 
\begin{equation}\label{eq ab comm1}
a=\sum_{i=1}^n \lambda_i(a)\,e_i \quad \text{and}\quad \bar x=\sum_{i=1}^n \beta_i \,e_i\,.
\end{equation}
Reasoning by contradiction, suppose that $a$ and $\bx$ do not strongly operator commute. Then, there exist indices $1\leq j < k\leq n$ such that
\begin{equation}\label{assump}
\lambda_j(a)>\lambda_k(a)\quad\mbox{and}\quad\beta_j<\beta_k.
\end{equation}
Below we show that this last fact implies that $\bar x$ is not a local minimizer of $F(x-a)$ for $x\in[b]$.

Recall that every idempotent $p\in\mathcal V$ induces the Peirce decomposition $\mathcal V=\mathcal V(p,1)\oplus \mathcal V(p,0)\oplus \mathcal V(p,1/2)$, see \eqref{peirce}. Let $p:=e_j+e_k$, which turns out to be an idempotent in $\VV$. Observe that $\VV(p,1)$ is a simple Euclidean Jordan algebra of rank $2$, and 
\begin{equation}\label{eq defi hata hatx}
\hat a:=\lambda_j(a)e_j+\lambda_k(a) e_k\quad\mbox{and}\quad\hat x:=\beta_j e_j+\beta_k e_k,
\end{equation}
are elements of $\VV(p,1)$. Then, $\hat a$ and $\hat x$ operator commute in $\VV(p,1)$ but not strongly because of \eqref{assump}. From Lemma\,\ref{lem:curve}, item $(a)$, we deduce that there exists a continuous curve $\gamma:[0,\pi/2)\to [\hat x]$ such that $\gamma(0)=\hat x$, and 
\begin{equation}\label{key2}
\lambda (\gamma(\theta)-\hat a)\prec \lambda(\hat x-\hat a)\quad\mbox{and}\quad \lambda (\gamma(\theta)-\hat a)\neq \lambda(\hat x-\hat a),
\end{equation}
for all $\theta\in (0,\pi/2)$, where $\lambda(\cdot)$ is taken over $\VV(p,1)$. We now construct the continuous curve
$$
\bar x(\theta):=\sum_{i=1,\,i\notin\{j,k\}}^n \beta_i\ e_i+\gamma(\theta)\in \VV\ , \ \ \text{for} \ \theta\in [0,\pi/2)\,.
$$
By construction, $\lambda(\bar x(\theta))=\lambda(b)$, and hence $\bar x(\theta)\in[b]$, for $\theta\in [0,\pi/2)$.
Also, notice that $\bar x(0)=\bar x$. We claim that
\begin{equation}\label{claim_th}
\lambda(\bar x(\theta)-a)\prec \lambda(\bar x-a) \quad \text{and} \quad \lambda(\bar x(\theta)-a)\neq \lambda(\bar x-a) \ , \ \ \text{for} \
\theta\in (0,\pi/2)\,.
\end{equation}
Indeed, we have that
$$\bx(\theta)-a=\overbrace{\sum_{i=1,\,i\notin\{j,k\}}^n (\beta_i-\lambda_i(a))\ e_i}^{\in\VV(p,0)}+\overbrace{(\gamma(\theta)-\hat a)}^{\in\VV(p,1)}.
$$
Then, 
$$P_1\lambda(\bx(\theta)-a)=\left(\left(\beta_i-\lambda_i(a)\right)_{i=1,\,i\notin\{j,k\}}^n,\,\lambda(\gamma(\theta)-\hat a)\right),$$
for some permutation matrix $P_1$. Analogously, 
$$P_2\lambda(\bx-a)=\left((\beta_i-\lambda_i(a))_{i=1,\,i\notin\{j,k\}}^n,\,\lambda\left(\hat x-\hat a\right)\right),$$
for some permutation matrix $P_2$. Now, we use the property that if $\gamma\prec \eta$ and $\delta\prec\nu$ then $(\gamma,\delta)\prec (\eta,\nu)$, and if any of the first two majorization relations is strict, then the majorization relation $(\gamma,\eta)\prec (\eta,\nu)$ is also strict. Indeed, since the first $n-2$ terms of $P_1\lambda(\bx(\theta)-a)$ and $P_2\lambda(\bx-a)$ are equal, and because of the strict majorization \eqref{key2}, we conclude that $P_1\lambda(\bx(\theta)-a)$ is strictly majorized by $P_2\lambda(\bx-a)$ and the claim \eqref{claim_th} follows from these facts.

Finally, given $\varepsilon>0$ we choose $\theta\in (0,\pi/2)$ such that 
$\|\bar x-\bar x(\theta)\|\leq \varepsilon$. Furthermore, as $F$ is a strictly Schur-convex spectral function, its associated symmetric function $f$ is also strictly Schur-convex. Since $\lambda(\bar x(\theta)-a)$ is strictly majorized by $\lambda(\bar x-a)$, see \eqref{claim_th},  we deduce that 
$$F(\bar x(\theta)-a)<F(\bar x-a).$$
This last fact contradicts our assumption that $\bar x$ is a local minimizer of $F(x-a)$ for $x\in[b]$. Hence $a$ and $\bar x$ strong operator commute. Furthermore, from Theorem\,\ref{strongglob}, we conclude that $\bx$ is a global minimizer of $F(x-a)$ for $x\in\mathcal [b]$.

\noindent $(b)$. The proof follows a similar argument to that in the proof of the previous item. 
Since $\bx$ is a local maximizer of $F(x-a)$ for $x\in\mathcal [b]$ we deduce that $a$ and $\bx$ operator commute. Then, there exist a Jordan frame
$\{e_1,\ldots,e_n\}$ and real numbers $\beta_1,\ldots,\beta_n\in \mathbb R$ such that  \eqref{eq ab comm1} holds.
If we assume that $a$ and $\bx$ do not strongly operator commute then there exist indexes $1\leq j < k\leq n$ such that
\begin{equation}\label{assump2}
\lambda_j(a)>\lambda_k(a)\quad\mbox{and}\quad\beta_j>\beta_k.
\end{equation}
we now define $\hat a,\,\hat x\in\mathcal V(p,1)$ as in \eqref{eq defi hata hatx}, where $p=e_j+e_k$.
From Eq. \eqref{assump2} we see that $-\hat a$ and $\hat x$ do not strongly operator commute. Hence, from Lemma\,\ref{lem:curve}, item $(b)$, there exists a continuous curve $\gamma:[0,\pi/2)\to [\hat x]$ such that $\gamma(0)=\hat x$, and 
$$
 \lambda(\hat x-\hat a)\prec \lambda (\gamma(\theta)-\hat a)\quad\mbox{and} \quad \lambda(\hat x-\hat a)\neq  \lambda (\gamma(\theta)-\hat a),
$$
for all $\theta\in (0,\pi/2)$, where $\lambda(\cdot)$ is taken over $\VV(p,1)$. We now construct the continuous curve
$$
\bar x(\theta):=\sum_{i=1,\,i\notin\{j,k\}}^n \beta_i\ e_i+\gamma(\theta)\in \VV\ , \ \ \text{for} \ \theta\in [0,\pi/2)\,.
$$ Arguing as in the first part of the proof we get that $\lambda(\bar x-a)\prec \lambda(\bar x(\theta)-a)$ and 
$\lambda(\bar x-a)\neq \lambda(\bar x(\theta)-a)$, for $\theta\in (0,\pi/2)$. 
As before, the previous facts imply that $\bx$ is not a local maximizer of $F(x-a)$ for $x\in [b]$, against our assumption.
\end{proof}

The following remark shows that Theorem\,\ref{teo local Lidskii in EJAs} can not be extended for nonsimple EJAs.

\begin{remark}\label{rem no ejem2}
Let $\VV$ be a possibly not simple EJA, and let $b\in \VV$. Then, there exists a finite family $\{b_j \}_{j=1}^m$ of elements in $\VV$
such that: $b_1=b$,
\begin{enumerate}
\item\label{it1} $\lambda(b_j)=\lambda(b)$, for $1\leq j\leq m$;
\item\label{it2} For $1\leq \ell\neq j\leq m$, $[b_\ell]_w\neq [b_j]_w$ and $ [b]=\bigcup _{j=1}^m [b_j]_w\,.$
\item\label{it3} For $1\leq \ell\neq j\leq m$ we have that 
$\text{dist}([b_j]_w,\,[b_\ell]_w)=\inf\left \{ || c-d|| \ : \ c\in [b_j]_w \ , \ d\in [b_\ell]_w\right \} \ > \ 0\,.$
		This last claim follows from the fact that $[b_\ell]_w$ and $[b_j]_w$ are compact sets with empty intersection.
\end{enumerate}
Consider now the notation of Example \ref{no exa1}. Thus, we let $\mathcal V=\mathcal S\times \mathcal S$, where
$\mathcal S$ is a simple EJA of rank two. Recall that in this context we can construct $a,\,b \in\mathcal V$ such that $x$ and $a$ do not strongly operator commute, for any $x\in [b]_w$. Moreover, notice that in that example, $a\in [b]$, since $\lambda(a)=\lambda(b)$. 
Given a strictly convex spectral function $F:\mathcal V\rightarrow \mathbb R$, let $\bx\in [b]_w$ be a global (and hence a local) minimizer of $F(x-a)$ for $x\in [b]_w$. We now claim that $\bx\in [b]$ is a local minimizer of $F(x-a)$ for $x\in [b]$ (i.e., is a local minimizer for the larger domain $[b]$). 
In fact, by item \ref{it3}. above we see that  $[b]_w=[b_1]_w$ is separated from $$\bigcup _{2\leq j\leq m} \  [b_j]_w\,.$$ That is, we can find a sufficiently small neighborhood of $\bx$ in $[b]$ that is actually contained in $[b_1]_w$. This last fact implies the previous claim. Of course, $\bx$ is not a global minimizer of $F$ in $[b]$, since $a\in [b]$ and $\lambda(a-a)=0_n\prec \lambda(x-b)$ and $0_n\neq \lambda(x-b)$, for every $x\in [b]_w$. Hence, $F(0)=F(a-a)< F(x-a)$, for every $x\in [b]_w$ (these facts follow from the majorization relation $0_n\prec \alpha$, for $\alpha \in \mathbb R^n$ such that $\sum_{i=1}^n \alpha_i=0$). Finally, notice that by construction $a$ and $\bx\in [b]_w$ do not strongly operator commute. 
\end{remark}

\begin{remark}\label{general}
For simplicity of presentation, Theorems  \ref{th:strongly}, \ref{teo local Lidskii in EJAs tutti}, and \ref{teo local Lidskii in EJAs} require $F(\cdot)$ to be strictly Schur-convex on the entire space $\VV$. However, those theorems are still valid if $F(\cdot)$ is strictly Schur-convex in some spectral set $K\subseteq\VV$ such that $\Omega-a\subseteq K$ (with $\Omega=[b]$ for Theorem \ref{teo local Lidskii in EJAs}). 
\end{remark}

\section{An application to the problem of minimizing the condition number in EJA}\label{application}

In this section, we assume that $\VV$ is an EJA of rank $n\geq 2$. Recall that $\lambda_i(x)$ are the eigenvalues of $x\in\VV$ arranged in nonincreasing order: $\lambda_1(x)\geq\cdots\geq\lambda_n(x)$. Let $\VV_+$ denote the interior of the cone of square elements of $\VV$ (symmetric cone of $\VV$). It can be characterized as $\VV_+=\{x\in\VV:\lambda_n(x)>0\}$.  

The condition number of $x\in \VV_+$ is defined as
$$\mathfrak{c}(x)=\frac{\lambda_1(x)}{\lambda_n(x)}.$$
Given $a\in \VV_+$, there is interest in studying the problem of perturbing $a$ in such a way as to diminish its condition number as much as possible, see \cite{Seeger2022, Greif}. This problem can be formulated
\begin{equation}\label{condition_number}
\min_{x\in\Omega} \mathfrak{c}(x+a),
\end{equation}
where $\Omega$ is some set of admissible perturbations. It is known that $\mathfrak{c}(\cdot)$ is Schur-convex on $\VV_{+}$, but not necessarily strictly Schur-convex. Thus, we cannot apply our results directly to study this problem since we require strict Schur-convexity.

To overcome the above drawback, we consider a vector containing not only the condition number but also the ratio between the $i$th eigenvalue and the $(n-i+1)$th eigenvalue, for $i=1,\ldots,\lfloor n/2\rfloor$. 
 This is specified in the next definition.
\begin{definition}
Let $x\in\VV_{+}$, and let
$$\kappa_i(x):=\frac{\lambda_i(x)}{\lambda_{n-i+1}(x)}\quad\text{for }i=1,\ldots,\lfloor n/2\rfloor.$$
The \emph{condition vector} of $x$ is defined as the vector $\kappa(x)=\left(\kappa_1(x),\ldots,\kappa_{\lfloor n/2\rfloor}(x)\right)\in\RR^{\lfloor n/2\rfloor}$.
\end{definition}
It is straightforward to see that the entries of $\kappa(x)$ are arranged in nonincreasing order and that 
$\kappa_1(x)=\mathfrak{c}(x)$. Because of those facts, it can be proved that
$$\lfloor n/2\rfloor^{-1/2}\|\kappa(x)\|\leq \mathfrak{c}(x)\leq \|\kappa(x)\|,$$
for every $x\in\VV_+$, where $\|\kappa(x)\|$ denotes the euclidean norm of $\kappa(x)$. This shows that the behavior of $\|\kappa(x)\|$ is comparable to the behavior of $\mathfrak{c}(x)$. That is, if $\|\kappa(x)\|$ is small or large, then $\mathfrak{c}(x)$ is small or large, respectively. Thus, we can analyze the problem \eqref{condition_number} by solving the problem
\begin{equation}\label{cond_vector}
\min_{x\in\Omega} \Vert\kappa(x+a)\Vert.
\end{equation}
Our goal is to describe some fundamental properties of the solutions of \eqref{cond_vector} by relying on the tools we developed in the previous sections. For $x\in\VV_+$, let us denote $\kappa_{\|\cdot\|}(x):=\Vert\kappa(x)\Vert$. The next proposition shows that $\kappa_{\|\cdot\|}$ is a strictly Schur-convex spectral function on $\VV_+$. Let $\RR^n_+$ denote the interior of the positive orthant of $\RR^n$. 
\begin{proposition}
The following statements are satisfied.
\begin{enumerate}
\item[(a)] $\kappa_{\|\cdot\|}:\VV_+\to\RR$ is a spectral function with associated symmetric function $\phi_{\|\cdot\|}:\RR^n_+\to\RR$ given by $\phi_{\|\cdot\|}(u)=\Vert\phi(u)\Vert$, with
$$\phi(u)=\left(\frac{u^{\downarrow}_1}{u^{\downarrow}_{n}}, \frac{u^{\downarrow}_2}{u^{\downarrow}_{n-1}},\ldots, \frac{u^{\downarrow}_{\lfloor n/2\rfloor}}{u^{\downarrow}_{n-\lfloor n/2\rfloor+1}}\right)\in\RR^{\lfloor n/2\rfloor}.$$
\item[(b)] $\kappa_{\|\cdot\|}$ is a strictly Schur-convex spectral function on $\VV_+$. 
\end{enumerate}
\end{proposition}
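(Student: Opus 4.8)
For part (a) I would simply unwind the definitions: since eigenvalues are listed in nonincreasing order we have $\lambda(x)=\lambda(x)^{\downarrow}$, so $\kappa_i(x)=\lambda_i(x)/\lambda_{n-i+1}(x)=\lambda(x)^{\downarrow}_i/\lambda(x)^{\downarrow}_{n-i+1}=\phi_i(\lambda(x))$ for $i=1,\dots,\lfloor n/2\rfloor$, whence $\kappa_{\|\cdot\|}(x)=\|\kappa(x)\|=\|\phi(\lambda(x))\|=\phi_{\|\cdot\|}(\lambda(x))$; and $\phi_{\|\cdot\|}$ is symmetric because $\phi(u)$ depends on $u$ only through $u^{\downarrow}$. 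For part (b), by the definition of a strictly Schur-convex spectral function it suffices to prove that $\phi_{\|\cdot\|}:\RR^n_+\to\RR$ is strictly Schur-convex, and I would do this in two steps followed by a synthesis.

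Step 1 is a weak-majorization estimate for the condition vector: $u\prec v$ in $\RR^n_+$ implies $\phi(u)\prec_w\phi(v)$. Since the entries of $\phi(u)$ and $\phi(v)$ are already nonincreasing, this would follow from showing that, for each $k\in\{1,\dots,\lfloor n/2\rfloor\}$, the function $g_k(u):=\sum_{i=1}^{k}u^{\downarrow}_i/u^{\downarrow}_{n-i+1}$ is Schur-convex on $\RR^n_+$. I would verify this via the Schur--Ostrowski criterion on the open dense set where the coordinates of $u$ are pairwise distinct (on which $g_k$ is $C^1$): with $u_1>\cdots>u_n>0$, and noting that the ``numerator'' indices $\{1,\dots,k\}$ are disjoint from the ``denominator'' indices $\{n-k+1,\dots,n\}$ because $k\le\lfloor n/2\rfloor$, one gets $\partial_\ell g_k=1/u_{n-\ell+1}$ for $\ell\le k$, $\partial_\ell g_k=-u_{n-\ell+1}/u_\ell^{2}$ for $\ell\ge n-k+1$, and $\partial_\ell g_k=0$ otherwise. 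A short case check using only $u_1\ge\cdots\ge u_n>0$ then gives $\partial_\ell g_k\ge\partial_m g_k$ whenever $\ell<m$, which is the Schur--Ostrowski condition; Schur-convexity of $g_k$ on all of $\RR^n_+$ follows by continuity.

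Step 2 records the strictness input: if $u\prec v$ in $\RR^n_+$ and $u^{\downarrow}\ne v^{\downarrow}$, then $\phi(u)\ne\phi(v)$. I would prove the contrapositive by induction on $n$. If $\phi(u)=\phi(v)$, then $u^{\downarrow}_1/u^{\downarrow}_n=v^{\downarrow}_1/v^{\downarrow}_n$; combined with $u^{\downarrow}_1\le v^{\downarrow}_1$ and $u^{\downarrow}_n\ge v^{\downarrow}_n$ (consequences of $u\prec v$) this forces $u^{\downarrow}_1=v^{\downarrow}_1$ and $u^{\downarrow}_n=v^{\downarrow}_n$. The truncated vectors $(u^{\downarrow}_2,\dots,u^{\downarrow}_{n-1})$, $(v^{\downarrow}_2,\dots,v^{\downarrow}_{n-1})\in\RR^{n-2}_+$ still satisfy the majorization relation and still have equal condition vectors $(\phi_2,\dots,\phi_{\lfloor n/2\rfloor})$, so the induction hypothesis gives $u^{\downarrow}_i=v^{\downarrow}_i$ for $2\le i\le n-1$, and with the two endpoints this yields $u^{\downarrow}=v^{\downarrow}$; the cases $n=2,3$ are immediate. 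For the synthesis, recall from Section\,\ref{majorization} that a strictly convex symmetric norm on $\RR^{\lfloor n/2\rfloor}$ — in particular the Euclidean norm — is strictly Schur-convex; it is also strictly increasing in each variable on the nonnegative orthant. Hence if $a\prec_w b$ with $a,b\ge 0$ and $a^{\downarrow}\ne b^{\downarrow}$, then $\|a\|<\|b\|$: choose, in the standard way, $c\ge 0$ with $a^{\downarrow}\le c^{\downarrow}$ componentwise and $c\prec b$, and use strict monotonicity of $\|\cdot\|$ if $a^{\downarrow}\ne c^{\downarrow}$ and strict Schur-convexity if $c^{\downarrow}\ne b^{\downarrow}$ (one of the two must hold). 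Applying this with $a=\phi(u)$, $b=\phi(v)$ — which are nonnegative, satisfy $\phi(u)\prec_w\phi(v)$ by Step 1, and satisfy $\phi(u)\ne\phi(v)$ by Step 2 whenever $u^{\downarrow}\ne v^{\downarrow}$ — yields $\phi_{\|\cdot\|}(u)=\|\phi(u)\|<\|\phi(v)\|=\phi_{\|\cdot\|}(v)$, which is precisely the strict Schur-convexity of $\phi_{\|\cdot\|}$ on $\RR^n_+$.

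The part I expect to require the most care is Step 1: beyond the (elementary but fiddly) case analysis verifying the Schur--Ostrowski inequalities, the function $g_k$ is only piecewise smooth — it fails to be differentiable exactly where coordinates of $u$ coincide — so one must check that Schur-convexity on the dense set of distinct-coordinate vectors propagates to all of $\RR^n_+$. The synthesis step, although standard, genuinely uses monotonicity of the norm, which explains why the statement must be restricted to $\VV_+$ (equivalently, to $\RR^n_+$) rather than to all of $\VV$.
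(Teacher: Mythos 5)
Your proposal is correct, and its second half (the strictness argument and the final synthesis) essentially coincides with the paper's: the paper also peels off the extremal coordinates using $u_1/u_n=v_1/v_n$ together with $v_1\geq u_1$, $v_n\leq u_n$ and recurses inward, and it also reduces the passage from weak majorization of the condition vectors to strict inequality of their norms to the fact that $\|\cdot\|$ is strictly Schur-convex and strictly increasing on the nonnegative orthant (citing \cite[Theorem 3.A.8]{Marshall} where you re-derive the needed fact via an intermediate vector $c$). The genuine difference is in your Step 1. The paper obtains $\phi(u)\prec_w\phi(v)$ by composition: $u\prec v$ and convexity of $t\mapsto 1/t$ give $(u_n^{-1},\dots,u_1^{-1})\prec_w(v_n^{-1},\dots,v_1^{-1})$ by \cite[Theorem 5.A.1]{Marshall}, and then the entrywise-product theorem \cite[Theorem 5.A.4.f]{Marshall} yields weak majorization of the vectors $(u_i/u_{n-i+1})_i$. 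You instead verify the Schur--Ostrowski inequalities for each partial sum $g_k$ by direct computation. Your derivative formulas and case check are correct (the disjointness of numerator and denominator indices for $k\leq\lfloor n/2\rfloor$ is exactly the point), but, as you anticipate, this route carries a real technical burden that the paper's does not: $g_k$ fails to be $C^1$ where coordinates coincide, and the set of pairwise-distinct vectors is open and dense but not convex, so the standard Schur--Ostrowski theorem does not apply verbatim; one must either pass to the pairwise (two-coordinate) characterization of Schur-convexity for continuous symmetric functions and argue by continuity from the smooth region, or track T-transforms across the diagonal. This is fillable but fiddly, whereas the paper's citation-based composition argument sidesteps smoothness entirely. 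In exchange, your version is self-contained and makes the mechanism (why the ratios behave monotonically under Robin Hood transfers) explicit.
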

\begin{proof}
(a) It is clear from the definition of $\kappa_{\|\cdot\|}$.\\
(b) We first show that $\phi_{\|\cdot\|}$ is Schur-convex on $\RR^n_+$. Indeed, let $u,\,v\in \RR^n_+$ be such that $u\prec v$. Without loss of generality, we assume that $u=u^\downarrow$ and $v=v^\downarrow$. Hence, by the relations between majorization and convex functions (see \cite[Theorem\,5.A.1]{Marshall}), we get that
$$(\RR^n_+)^\downarrow\ni(u_n^{-1},\ldots, u_1^{-1})\prec_w (v_n^{-1},\ldots, v_1^{-1})\in (\RR^n_+)^\downarrow\,.$$
As a consequence, we get the following relations for the entry-wise product vectors (see \cite[Theorem\,5.A.4.f]{Marshall}) $(\mathbb R_+^n)^\downarrow\ni (u_i/ u_{n-i+1})_{i=1}^n \prec_w (v_i/ v_{n-i+1})_{i=1}^n\in (\mathbb R_+^n)^\downarrow \,.$ In particular,
\begin{equation}\label{mitad}
(\mathbb R_+^{\lfloor n/2\rfloor})^\downarrow\ni(u_i/ u_{n-i+1})_{i=1}^{\lfloor n/2\rfloor} \prec_w (v_i/ v_{n-i+1})_{i=1}^{\lfloor n/2\rfloor}\in (\mathbb R_+^{\lfloor n/2\rfloor})^\downarrow,
\end{equation}which implies that 
$\phi_{\|\cdot\|}(u)\leq \phi_{\|\cdot\|}(v)$.\\
 We now show that $\phi_{\|\cdot\|}$ is strictly Schur-convex on $\RR^n_+$. As before,
we consider $u,v\in \RR^n_+$ such that $u\prec v$, and assume that $u=u^\downarrow$ and $v=v^\downarrow$. 
 Notice that, in particular, these vectors satisfy the normalization condition  $\tr(u):=u_1+\cdots+u_n=v_1+\cdots+v_n:=\tr(v)>0$.
  Suppose further that $\phi_{\|\cdot\|}(u)=\phi_{\|\cdot\|}(v)$.  Notice that $\|\cdot\|$ is a strictly Schur-convex function, and it is a strictly increasing function on $\RR^n_+$ (for the componentwise ordering in $\RR^n_+$). Hence, from \cite[Theorem\,3.A.8]{Marshall} and the submajorization \eqref{mitad} we have that 
 $u_i/ u_{n-i+1}=v_i/ v_{n-i+1}$ for all $i=1,\ldots,\lfloor n/2\rfloor$. In particular, we get that $u_1/u_n=v_1/v_n$; now, using that $u\prec v$ implies that $v_n\leq u_n$ and $v_1\geq u_1$, we see that $u_i=v_i$ for $i\in\{1,n\}$. We can continue in this way and show that $u_i=v_i$, for $i\in \{j,\,n-j+1\}$ and $1\leq j\leq \lfloor n/2\rfloor$. Thus, we get $u=v$ when $n$ is even. When $n$ is odd, the previous fact together with the normalization condition  $\tr(u)=\tr(v)$  imply that $u=v$.
\end{proof}

Finally, the following theorem characterizes the solutions of \eqref{cond_vector} and, in some cases, reduces \eqref{cond_vector} to a minimization problem formulated in $\RR^n$. The proof of each item follows by a direct application of Theorems \ref{teo local Lidskii in EJAs tutti}, \ref{th:strongly} and \ref{teo local Lidskii in EJAs}, respectively (see also Remark\,\ref{general}).
\begin{theorem}
Let $a\in\VV_+$  and let $\Omega\subseteq\VV$ be such that $\Omega+a\subseteq\VV_+$. Consider the problem \eqref{cond_vector}.
Then, the following statements are satisfied:
\begin{enumerate}
\item[(a)] Suppose that $\Omega$ is a weakly spectral set. If $\bx$ is a solution of \eqref{cond_vector}, then $a$ and $\bx$ operator commute.
\item[(b)] Suppose that $\Omega$ is a spectral set with $Q\subseteq\RR^n$ as its associated symmetric set. If $\bx$ is a solution of \eqref{cond_vector}, then $-a$ and $\bx$ strongly operator commute. Furthermore,
$$\min_{x\in\Omega} \Vert\kappa(x+a)\Vert=\min_{u\in Q} \Vert\phi(u-\lambda(-a))\Vert.$$
\item[(c)] Let $b\in\VV$ be such that $[b]+a\subset\VV_+$. Suppose that $\VV$ is simple. If $\bx$ is a local solution of
\begin{equation}\label{cond_vector3}
\min_{x\in[b]} \Vert\kappa(x+a)\Vert,
\end{equation}
then $-a$ and $\bx$ strongly operator commute. Furthermore, $\bx$ is a global solution of \eqref{cond_vector3}, and
$$\min_{x\in[b]} \Vert\kappa(x+a)\Vert= \Vert\phi(\lambda(b)-\lambda(-a))\Vert.$$
\end{enumerate}
\end{theorem}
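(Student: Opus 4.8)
The plan is to reduce all three items to the commutation principles already proved, applied to the strictly Schur-convex spectral function $\kappa_{\|\cdot\|}$, after the sign change $a\mapsto -a$ that converts $x\mapsto \|\kappa(x+a)\|$ into a shifted spectral function of the form $F(x-(-a))$. First I would record the two preliminary observations needed to invoke Remark \ref{general}: (i) $\VV_+=\{x\in\VV:\lambda(x)\in\RR^n_{++}\}$ is a spectral set, since the interior of the positive orthant is a symmetric subset of $\RR^n$; and (ii) by the preceding proposition $\kappa_{\|\cdot\|}$ is a strictly Schur-convex spectral function on $\VV_+$, with associated symmetric function $\phi_{\|\cdot\|}(u)=\|\phi(u)\|$. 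Extending $\kappa_{\|\cdot\|}$ to a spectral function on all of $\VV$ arbitrarily (its values off $\VV_+$ are irrelevant to the problem), the hypothesis $\Omega+a\subseteq\VV_+$ reads exactly as $\Omega-(-a)\subseteq K$ with $K=\VV_+$, so Remark \ref{general} lets me apply Theorems \ref{th:strongly}, \ref{teo local Lidskii in EJAs tutti} and \ref{teo local Lidskii in EJAs} verbatim with $F=\kappa_{\|\cdot\|}$ and $a$ replaced by $-a$.

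For item (a) I would invoke Theorem \ref{teo local Lidskii in EJAs tutti} with $-a$ and the weakly spectral set $\Omega$: a solution of \eqref{cond_vector} is in particular a local minimizer of $\|\kappa(x+a)\|$ on $\Omega$, hence $-a$ and $\bx$ operator commute; since $L_{-a}=-L_a$, this is the same as $a$ and $\bx$ operator commuting. For item (b) I would invoke Theorem \ref{th:strongly}(a) with $-a$ and the spectral set $\Omega$ with symmetric set $Q$: a solution of \eqref{cond_vector} is a global minimizer of $\|\kappa(x+a)\|$ on $\Omega$, so $-a$ and $\bx$ strongly operator commute, and the value formula \eqref{igual} specializes to
\[
\min_{x\in\Omega}\|\kappa(x+a)\|=\min_{u\in Q}\phi_{\|\cdot\|}(u-\lambda(-a))=\min_{u\in Q}\|\phi(u-\lambda(-a))\|.
\]

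For item (c) I would invoke Theorem \ref{teo local Lidskii in EJAs}(a) with $-a$, using that $\VV$ is simple and $[b]+a\subseteq\VV_+$: a local solution $\bx$ of \eqref{cond_vector3} is a local minimizer of $\|\kappa(x+a)\|$ on $[b]$, hence $-a$ and $\bx$ strongly operator commute and, by the last clause of that theorem, $\bx$ is a global minimizer. For the explicit value, strong operator commutativity provides a Jordan frame $\{e_1,\dots,e_n\}$ with $\bx=\sum_i\lambda_i(\bx)e_i$ and $-a=\sum_i\lambda_i(-a)e_i$, so $\bx+a=\sum_i(\lambda_i(\bx)-\lambda_i(-a))e_i$ and $\lambda(\bx+a)=(\lambda(\bx)-\lambda(-a))^\downarrow=(\lambda(b)-\lambda(-a))^\downarrow$; since $\phi$ depends only on the nonincreasing rearrangement of its argument, $\|\kappa(\bx+a)\|=\|\phi(\lambda(b)-\lambda(-a))\|$, and global minimality of $\bx$ yields the claimed identity.

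I expect the only genuinely delicate part to be the two preliminary observations — verifying that $\VV_+$ is a spectral set and that Remark \ref{general} legitimately reduces the problem to the already-established theorems — together with careful bookkeeping of the substitution $a\mapsto -a$ (operator commutativity is insensitive to it because $L_{-a}=-L_a$, while for strong operator commutativity items (b) and (c) are already phrased in terms of $-a$, so no retranslation is needed). Everything after that is a direct citation of the respective theorems.
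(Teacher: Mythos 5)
Your proposal is correct and takes essentially the same route as the paper, which likewise disposes of all three items by direct citation of Theorems \ref{teo local Lidskii in EJAs tutti}, \ref{th:strongly} and \ref{teo local Lidskii in EJAs} together with Remark \ref{general}, after rewriting $\|\kappa(x+a)\|$ as $F(x-(-a))$ with $F=\kappa_{\|\cdot\|}$ strictly Schur-convex on the spectral set $\VV_+$. Your additional bookkeeping (the $a\mapsto-a$ translation, the equivalence of operator commutation with $a$ and with $-a$, and the derivation of the closed-form value in item (c) from $\lambda(\bx+a)=(\lambda(b)-\lambda(-a))^{\downarrow}$) is exactly the detail the paper leaves implicit.
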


{\small
 }

\end{document}